
\listfiles
\documentclass[a4paper, abstract=on]{scrartcl}

\usepackage{geometry}
\geometry{twoside, outer=20mm, inner=25mm, top=30mm, bottom=30mm}
\PassOptionsToPackage{usenames,dvipsnames}{xcolor}
\usepackage[english]{babel}
\usepackage[utf8]{inputenc}
\usepackage{graphicx, latexsym}
\usepackage{amsmath, amssymb, amsthm, amsbsy, mathdots, bm}
\usepackage{framed, multirow, epstopdf, psfrag, hyperref, pdfpages, xspace, microtype, lmodern, paralist, setspace, multicol}
\allowdisplaybreaks{}
\usepackage{algorithm, algorithmicx, algpseudocode}
\algrenewcommand\algorithmicrequire{\textbf{Input:}}
\algrenewcommand\algorithmicensure{\textbf{Output:}}
\algnewcommand\algorithmicassumption{\textbf{Assumption:}}
\algnewcommand\Assumption{\item[\algorithmicassumption]}

\usepackage{blkarray}
\usepackage{float}

\usepackage[noadjust]{cite}
\usepackage{array, csquotes, scrhack, booktabs, breakcites, caption, listings, subcaption}
\usepackage[mathscr]{euscript}
\usepackage{booktabs}

\usepackage{interval}
\intervalconfig{left open fence = (, right open fence =) }

\newcommand{\interor}[2]{\ensuremath{\interval[open right]{#1}{#2}}}

\newcommand{\interc}[2]{\ensuremath{\interval{#1}{#2}}}

\usepackage{cite}
\bibliographystyle{plain}

\usepackage{todonotes}

\usepackage[software,hardware]{maxmacros}


%
\newtheoremstyle{break}     
{0.5em}         
{0.5em}         
{\itshape}      
{}              
{\bfseries}     
{.}             
{\newline}      
{}              
\theoremstyle{break}

\newtheorem{theorem}{Theorem}[section]

\newtheorem{lemma}{Lemma}[section]

\newtheorem{example}{Example}[section]

\newtheorem{remark}{Remark}[section]




\newcommand{\norm}[1]{\left\Vert#1\right\Vert}

\newcommand{\fronorm}[1]{\left\Vert#1\right\Vert_F}
\newcommand{\twonorm}[1]{\left\Vert#1\right\Vert_2}

\newcommand{\frodot}[2]{\langle#1, #2 \rangle_F}


\newcommand{\Rfield}[0]{\mathbb{R}}
\newcommand{\Cfield}[0]{\mathbb{C}}
\newcommand{\Rmat}[2]{\mathbb{R}^{#1 \times{#2}}}

\newcommand{\mbR}[1]{\mathbb{R}^{#1}}

\newcommand{\Nzero}[0]{\mathbb{N}_0}
\newcommand{\Repart}{\operatorname{Re}}

\newcommand{\Cm}{\mathbb{C}^{-}}





\newcommand{\vareps}[0]{\varepsilon}





\newcommand{\spektrum}[1]{\Lambda({#1})}



\DeclareMathOperator{\rank}{rank}

\DeclareMathOperator{\diag}{diag}
\DeclareMathOperator{\range}{range}

\newcommand{\dx}[1]{\mathrm{d}#1}

\newcommand{\rz}[1]{\ensuremath{\mathord{\mathrm{#1}}}}

\newcommand{\mymid}{\;\middle|\;}

\newcommand{\ARE}[0]{\textup{ARE}}
\newcommand{\DRE}[0]{\textup{DRE}}
\newcommand{\NDRE}[0]{\textup{NDRE}}
\newcommand{\ALE}[0]{\textup{ALE}}

\newcommand{\LambdaK}[1]{\ensuremath{\lambda^\downarrow_{{#1}}}}

\newcommand{\Grm}[2]{G^{#1}\left(\Rfield^{#2}\right)}
\newcommand{\GrmG}[2]{G_{0}^{#1}\left(\Rfield^{#2}\right)}

\DeclareMathOperator*{\myargmin}{argmin}

\usepackage{tikz, tikz-3dplot, pgfplotstable, pgfplots}
\pgfplotsset{compat=1.13}
\usetikzlibrary{mindmap,shadows}
\usetikzlibrary{decorations}
\usetikzlibrary{decorations.markings}
\usetikzlibrary{matrix,decorations.pathreplacing,calc}
\usetikzlibrary{patterns}
\pgfplotsset{filter discard warning=true}

\pgfplotsset{
    discard if not/.style 2 args={
        filter discard warning=false,
        x filter/.append code={
            \edef\tempa{\thisrow{#1}}
            \edef\tempb{#2}
            \ifx\tempa\tempb{}
            \else
                
            \fi
        }
    },
    discard if not symbolic/.style 2 args={
        filter discard warning=false,
        x filter/.append code={
            \edef\tempa{\thisrow{#1}}
            \edef\tempb{#2}
            \ifx\tempa\tempb{}
            \else
                
            \fi
        },
    }
}


\newcommand{\LIE}{\texttt{LIE}}
\newcommand{\STRANG}{\texttt{STRANG}}
\newcommand{\SYMMETRICtwo}{\texttt{SYMMETRIC2}}
\newcommand{\SYMMETRICfour}{\texttt{SYMMETRIC4}}
\newcommand{\SYMMETRICsix}{\texttt{SYMMETRIC6}}
\newcommand{\SYMMETRICeight}{\texttt{SYMMETRIC8}}
\newcommand{\RAIL}{\texttt{RAIL}}
\newcommand{\CONVDIFF}{\texttt{CONV\_DIFF}}


\newcommand{\AREGALERKIN}{\texttt{ARE-Galerkin}}


\captionsetup[figure]{name={Fig.},labelsep=period}

\newcommand{\vspacealgorithm}{\vspace{.1in}}

\setlength{\parindent}{0pt}

\author{Maximilian Behr \and Peter Benner \and Jan Heiland}
\title{Invariant Galerkin Ansatz Spaces and Davison-Maki Methods for the Numerical Solution of Differential Riccati Equations}
\date{\today}
\begin{document}
\maketitle
\begin{abstract}
    \noindent

The differential Riccati equation appears in different fields of applied mathematics like control and system theory.
Recently Galerkin methods based on Krylov subspaces were developed for the autonomous differential Riccati equation.
These methods overcome the prohibitively large storage requirements and computational costs of the numerical solution.
In view of memory efficient approximation, we review and extend known solution formulas and identify invariant subspaces for a possibly low-dimensional solution representation.
Based on these theoretical findings, we propose a Galerkin projection onto a space related to a low-rank approximation of the algebraic Riccati equation.
For the numerical implementation, we provide an alternative interpretation of the modified \emph{Davison-Maki method} via the transformed flow of the differential Riccati equation,
which enables us to rule out known stability issues of the method in combination with the proposed projection scheme.
We present numerical experiments for large-scale autonomous differential Riccati equations and compare our approach with high-order splitting schemes.

\end{abstract}

\tableofcontents

\section{Introduction}\label{sec:introduction}

In this paper we consider the autonomous differential Riccati equation
\begin{align*}
    \dot{X}(t) &= A^T X(t) + X(t) A  - X(t) BB^T X(t) +C^T C, \\
    X(0) &= X_0.
\end{align*}
The equation plays an important role in model order reduction, optimal control, differential games and stability analysis~\cite{AboFIJ03,Loc01,KnoK85,Bro70,McC17,BecM15,BenM18}.
We focus in this work on the large-scale case.
In this setting, the numerical approximation of $X$ comes with high memory requirements and high computational costs. Just the storage of the solution at the relevant time instances would scale with ${N_t}n^2$, where $n$ is the dimension of the problem and $N_t$ is the number of time steps.
The approach of first discretizing in time and then focusing on efficient approximation of the resulting algebraic equations has been the main course of research on this problem setup, see, e.g.,~\cite{BenM04,BenM13,Hei14,KoeLS16,LanMS15,Men07,Sti15,Lan17,Sti18,MenPS19,BenM18}.
In all these approaches, the approximation of at least one large-scale algebraic equation has to be solved and stored for every time step so that the memory demands still scale with ${N_t}n$.
Conceptually, it seems more beneficial for the autonomous differential Riccati equation to first reduce the problem dimensions to, say, $k\ll n$ and then approach the reduced equation as this leads to storage requirements in the order of ${N_t}k$. In this respect, Krylov subspace methods have been proposed~\cite{GulHJetal18,HacJ18a,HacJ18b,HacJ18c,KosM17,KiS19,AngHJ18} that generate a trial space for the numerical solution using an Arnoldi method.
The resulting Galerkin projected system is of lower order and can be solved with low memory demand and with various methods that exist for differential Riccati equations of small or moderate size.

In this work, we develop a Galerkin approach, where the trial space is based on the numerical solution of the algebraic Riccati equation. This extends the concepts of our previous work on a numerical scheme for differential Lyapunov equations~\cite{BehBH18}.

The paper is organized as follows.
In Section~\ref{sec:algebraic_and_differential_riccati_equations} we introduce the algebraic and differential Riccati equations and review the relevant fundamental properties about their solutions.
In Section~\ref{sec:randons_lemma} we review \emph{Radon's Lemma} and work out its implication that the differential Riccati equation is connected to a flow on a Grassmanian manifold.
Moreover, in Section~\ref{sec:radons_lemma:solution_formulas}, we apply \emph{Radon's Lemma} to obtain solution formulas for the differential Riccati equation based on the solution of the algebraic Riccati equation that we will use to explain and illustrate the major source of numerical instabilities of the \emph{Davison-Maki method} for the numerical solution of the differential Riccati equation; see Section~\ref{sec:radons_lemma:davison_maki_methods}  Then we will use the connection to the Grassmanian manifold to derive the \emph{modified Davison-Maki method} in a way that overcomes these instabilities.
In Section~\ref{sec:galerkin_approach_for_large_scale_differential_riccati_equations}, we develop a Galerkin approach for the solution of the differential Riccati equation in the matrix exponential representation that results from \emph{Radon's Lemma}.
We combine the monotonicity of the solution of the differential and relevant properties of the solution of the algebraic Riccati equation to define a suitable and numerically computable trial space for the approximation of the solution of the differential Riccati equation.
We propose to solve the resulting Galerkin system with the \emph{modified Davison-Maki method}.
Numerical results are presented in Section~\ref{sec:numerical_experiments} and Appendices~\ref{appendix:numerical_results_for_galerkin_approach} and~\ref{appendix:numerical_results_for_splitting_schemes}.

\section{Preliminaries}\label{sec:preliminiaries}

In this section we set the notation and review some basic results from linear algebra.
The identity matrix and zero matrix of size $n$ are written by $I_{n}$ and $0$.
The image or column space of a matrix $A\in \Rmat{n}{m}$ is denoted by $\range{(A)}$, and its kernel or null space by $\ker(A)$.
The 1--norm, 2--norm, Frobenius norm and Frobenius inner product are denoted by $\norm{\cdot}_1, \norm{\cdot}_2, \norm{\cdot}_F$ and $\frodot{\cdot}{\cdot}$, respectively.
The spectrum of a quadratic matrix $A$ is denoted by  $\spektrum{A}$. Generally, the spectrum is a subset of $\Cfield$.
A matrix is called stable, if its spectrum is contained in the left open complex half plane $\Cm$, i.e. $\spektrum{A}\subseteq \Cm$.
If $A$ is real and symmetric, all eigenvalues are real and $\LambdaK{k}(A)$ represents the $k$--largest eigenvalue.
Therefore, $\LambdaK{1}(A)\geq \LambdaK{2}(A) \geq \cdots \geq \LambdaK{n}(A)$ are the eigenvalues of $A$ ordered in a non-decreasing fashion.
The Loewner partial ordering on the set of real symmetric matrices is defined by $A \preccurlyeq B$, which means $B-A$ is positive semidefinite,~\cite[Ch. 7.7]{HorJ85}.
The orthogonal complement of a linear subspace $U\subseteq \Rfield^{n}$ is denoted by $U^{\perp}\subseteq \Rfield^{n}$.
For $A\in \Rmat{n}{n}$ and $B\in \Rmat{n}{b}$, the image of the Krylov matrix generated by $A$ and $B$ is denoted by
\begin{align*}
    \mathcal{K}(A,B):=\range\left(\begin{bmatrix} B, AB, \ldots, A^{n-1}B\end{bmatrix} \right)\subseteq \Rfield^{n}.
\end{align*}
The linear space $\mathcal{K}(A,B)$ is $A$--invariant.

\section{Algebraic and Differential Riccati Equations}\label{sec:algebraic_and_differential_riccati_equations}

In this section we introduce the algebraic and differential Riccati equation (\ARE/\DRE) and the
algebraic Lyapunov equation (\ALE).

Consider $A, X_0 \in \Rmat{n}{n}$ and $C\in \Rmat{c}{n}$ and $B\in \Rmat{n}{b}$.
Throughout this paper, we assume that $X_0$ is a symmetric positive semidefinite matrix and consider the \DRE{}
\begin{subequations}\label{eqn:ch_adre:auto_dre}
    \begin{align}
        \dot{X}(t) & = \mathcal R(X(t)) :=  A^{T} X(t) + X(t) A - X(t) BB^{T} X(t) + C^{T} C,           \label{eqn:ch_adre:auto_dre_eqn}              \\
        X(0)       & = X_0.                                                                             \label{eqn:ch_adre:auto_dre_initial}
    \end{align}
\end{subequations}

Stationary points of~\eqref{eqn:ch_adre:auto_dre_eqn} are solutions of the corresponding \ARE{}
\begin{align}
    0 & = \mathcal R(X) = A^{T} X + X A - X B B^{T} X + C^{T} C.    \label{eqn:ch_adre:are}
\end{align}
The linear version ($B=0$) of the \ARE{} is the \ALE{}
\begin{align}
    0 & = A^{T} X + X A + C^{T} C.  \label{eqn:ch_adre:ale}
\end{align}

We review some fundamental results about existence, uniqueness and properties of the solution of the \DRE~\eqref{eqn:ch_adre:auto_dre}, \ARE~\eqref{eqn:ch_adre:are} and the \ALE~\eqref{eqn:ch_adre:ale}.

\begin{theorem}[Existence and Uniqueness of Solutions to the \ALE{}~\eqref{eqn:ch_adre:ale},~{\cite[Thm. 1.1.3, 1.1.7]{AboFIJ03}}]\label{thm:ch_adre:ale_existence}
    If $\spektrum{A}\cap \spektrum{-A}= \emptyset$, then the \ALE{}~\eqref{eqn:ch_adre:ale} admits a unique solution $X_L\in \Rmat{n}{n}$.
    The solution $X_L$ is symmetric.
    If $A$ is stable, then $X_L$ is symmetric positive semidefinite and given by
    \begin{align}
        X_L = \int\limits_{0}^{\infty} e^{tA^T}{C}^T Ce^{tA} \dx{t}. \label{eqn:ch_adre:ale_sol_formula}
    \end{align}
\end{theorem}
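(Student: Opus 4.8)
The plan is to establish the three assertions of the theorem---existence--uniqueness, symmetry, and the positive-semidefinite integral formula---in that order, since each builds on the previous one.

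\textbf{Existence and uniqueness.} The key observation is that the \ALE{}~\eqref{eqn:ch_adre:ale} is a linear equation in the unknown $X$, so I would vectorize it. Writing $x = \vect(X)$, the map $X \mapsto A^T X + X A$ becomes the Kronecker-sum operator $(I_n \kron A^T) + (A^T \kron I_n)$ acting on $\Rfield^{n^2}$. It is a standard fact that the eigenvalues of this operator are exactly the sums $\lambda_i + \lambda_j$ where $\lambda_i, \lambda_j$ range over $\spektrum{A}$. Hence the operator is invertible precisely when no such sum vanishes, i.e.\ when $\lambda_i \neq -\lambda_j$ for all $i,j$, which is exactly the hypothesis $\spektrum{A} \cap \spektrum{-A} = \emptyset$. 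Invertibility of the operator gives existence and uniqueness of $X_L$ for the given right-hand side $-C^T C$. This is really the heart of the matter and I expect it to be the main obstacle only in the sense that one must correctly identify the spectrum of the Kronecker sum; everything else is bookkeeping.

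\textbf{Symmetry.} Once uniqueness is in hand, symmetry is a short argument: transpose the equation $0 = A^T X_L + X_L A + C^T C$. Since $C^T C$ is symmetric, the transpose reads $0 = A^T X_L^T + X_L^T A + C^T C$, so $X_L^T$ also solves the \ALE{}. By uniqueness, $X_L^T = X_L$.

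\textbf{The integral formula under stability.} Assume now $A$ is stable. First I would check that the integral in~\eqref{eqn:ch_adre:ale_sol_formula} converges: stability implies $\norm{e^{tA}}_2 \leq M e^{-\omega t}$ for some $M \geq 1$ and $\omega > 0$, so the integrand is bounded in norm by $M^2 \norm{C^T C}_2 e^{-2\omega t}$, which is integrable. Denote the value of the integral by $P$; clearly $P$ is symmetric positive semidefinite because each integrand $e^{tA^T} C^T C\, e^{tA}$ is. It remains to show $P$ solves the \ALE{}. I would compute, using that $\frac{\dx{}}{\dx{t}}\bigl(e^{tA^T} C^T C\, e^{tA}\bigr) = A^T e^{tA^T} C^T C\, e^{tA} + e^{tA^T} C^T C\, e^{tA} A$, and integrate both sides from $0$ to $\infty$; the left side telescopes to $\lim_{t\to\infty} e^{tA^T}C^T C\,e^{tA} - C^T C = -C^T C$ by stability, while the right side equals $A^T P + P A$. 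This yields $A^T P + P A + C^T C = 0$, so $P$ solves the \ALE{}, and since stability implies $\spektrum{A}\cap\spektrum{-A}=\emptyset$ (a stable matrix has no eigenvalue whose negative is also an eigenvalue, as both would lie in $\Cm$, impossible), uniqueness forces $X_L = P$. Since the whole theorem is attributed to~\cite{AboFIJ03}, in the actual paper one may simply cite it, but the sketch above is the route I would reconstruct.
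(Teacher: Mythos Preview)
Your proof is correct and follows the standard textbook route (Kronecker-sum spectrum for uniqueness, transposition for symmetry, and the derivative-of-exponential telescoping argument for the integral formula). Note, however, that the paper does not supply its own proof of this theorem at all: it is stated with attribution to~\cite[Thm.~1.1.3, 1.1.7]{AboFIJ03} and used as a black box. Your final remark anticipated exactly this, so there is nothing to compare---your reconstruction is essentially the argument one finds in the cited reference.
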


\begin{theorem}[Existence and Uniqueness of Solutions for the \ARE{}~\eqref{eqn:ch_adre:are},~{\cite[Lem. 2.4.1, Cor. 2.4.3]{AboFIJ03},~\cite[Ch. 10]{KnoK85}}]\label{thm:ch_adre:are_existence}
    Let $(A,B)$ be stabilizable and $(A,C)$ be detectable, then the \ARE{}~\eqref{eqn:ch_adre:are} has a unique stabilizing solution $X_{\infty}\in \Rmat{n}{n}$.
    This means $\mathcal{R}(X_{\infty}) = 0$ and $\Lambda(A-BB^T X_{\infty})\subseteq \Cm$.
    Moreover $X_{\infty}$ is symmetric positive semidefinite and there is no other symmetric positive semidefinite solution of the \ARE{}~\eqref{eqn:ch_adre:are}.
\end{theorem}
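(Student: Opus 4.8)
The plan is to realize $X_{\infty}$ through the stable invariant subspace of the Hamiltonian matrix
\[
    \mathcal{H} := \begin{bmatrix} A & -BB^{T} \\ -C^{T}C & -A^{T} \end{bmatrix} \in \Rmat{2n}{2n},
\]
and to derive all uniqueness statements from the spectral splitting of $\mathcal{H}$ together with Theorem~\ref{thm:ch_adre:ale_existence}. First I would record the symplectic structure: with $J := \begin{bmatrix} 0 & I_{n} \\ -I_{n} & 0 \end{bmatrix}$ one checks that $J\mathcal{H}$ is symmetric, equivalently $\mathcal{H}^{T}J + J\mathcal{H} = 0$, which forces $\spektrum{\mathcal{H}}$ to be invariant under $\lambda \mapsto -\bar{\lambda}$. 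The decisive point is that stabilizability of $(A,B)$ and detectability of $(A,C)$ together exclude eigenvalues of $\mathcal{H}$ on the imaginary axis: writing out the two block rows of $\mathcal{H}\begin{bmatrix} u \\ v \end{bmatrix} = i\omega\begin{bmatrix} u \\ v \end{bmatrix}$ with $\omega \in \Rfield$ and combining them shows $B^{T}v = 0$ and $Cu = 0$, after which the relation collapses to left- and right-eigenvector identities for $A$ at the purely imaginary eigenvalue $i\omega$ that the Hautus criteria for detectability, respectively stabilizability, forbid; hence $u = v = 0$. Consequently $\mathcal{H}$ has exactly $n$ eigenvalues in $\Cm$ and $n$ in $\Cp$, its stable invariant subspace is $n$-dimensional, and I fix a real basis of it as the columns of $\begin{bmatrix} U \\ V \end{bmatrix}$ with $U,V \in \Rmat{n}{n}$, so that $\mathcal{H}\begin{bmatrix} U \\ V \end{bmatrix} = \begin{bmatrix} U \\ V \end{bmatrix} M$ for some $M$ with $\spektrum{M} \subseteq \Cm$.

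The next step is to show that $U$ is invertible; this is where stabilizability enters a second time, through the standard argument that a nontrivial $\kernel(U)$ would produce an anti-stable uncontrollable mode of $A$. Setting $X_{\infty} := VU^{-1}$, the top block row of the invariance relation gives $A - BB^{T}X_{\infty} = UMU^{-1}$, so that $\spektrum{A - BB^{T}X_{\infty}} = \spektrum{M} \subseteq \Cm$, and the bottom block row, after right multiplication by $U^{-1}$ and use of $VMU^{-1} = X_{\infty}(A - BB^{T}X_{\infty})$, reduces precisely to $\mathcal{R}(X_{\infty}) = 0$. Symmetry of $X_{\infty}$ follows from the Lagrangian property of the stable subspace: the matrix $W := U^{T}V - V^{T}U$ satisfies $M^{T}W + WM = 0$ by inserting $\mathcal{H}^{T}J + J\mathcal{H} = 0$, hence $W = 0$ since $\spektrum{M}, \spektrum{M^{T}} \subseteq \Cm$, and therefore $X_{\infty}^{T} = U^{-T}V^{T} = U^{-T}(V^{T}U)U^{-1} = U^{-T}(U^{T}V)U^{-1} = VU^{-1} = X_{\infty}$. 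Positive semidefiniteness then drops out by rewriting the \ARE{} as the Lyapunov equation
\[
    (A - BB^{T}X_{\infty})^{T}X_{\infty} + X_{\infty}(A - BB^{T}X_{\infty}) = -\bigl( C^{T}C + X_{\infty}BB^{T}X_{\infty} \bigr) \preccurlyeq 0
\]
with the now stable coefficient matrix, so that Theorem~\ref{thm:ch_adre:ale_existence} yields $X_{\infty} = \int_{0}^{\infty} e^{t(A - BB^{T}X_{\infty})^{T}}\bigl( C^{T}C + X_{\infty}BB^{T}X_{\infty} \bigr)e^{t(A - BB^{T}X_{\infty})}\,\dx{t} \succcurlyeq 0$.

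For uniqueness I would argue twice. If $Y$ is any stabilizing solution, then $\range\begin{bmatrix} I_{n} \\ Y \end{bmatrix}$ is an $n$-dimensional $\mathcal{H}$-invariant subspace on which $\mathcal{H}$ acts with spectrum $\spektrum{A - BB^{T}Y} \subseteq \Cm$, so by the spectral splitting it must coincide with $\range\begin{bmatrix} U \\ V \end{bmatrix}$, whence $Y = VU^{-1} = X_{\infty}$. Now let $X = X^{T} \succcurlyeq 0$ be any solution of the \ARE{}. I would first show $A_{X} := A - BB^{T}X$ is stable: rewriting the \ARE{} as $A_{X}^{T}X + XA_{X} + XBB^{T}X + C^{T}C = 0$ and testing with a possibly complex eigenvector $v$, $A_{X}v = \lambda v$, gives $2\Repart(\lambda)\,v^{*}Xv = -\norm{Cv}_{2}^{2} - \norm{B^{T}Xv}_{2}^{2} \le 0$; if $\Repart(\lambda) \ge 0$ this forces $Cv = 0$ and $B^{T}Xv = 0$, hence $Av = \lambda v$ with $Cv = 0$, contradicting detectability of $(A,C)$. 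Subtracting $\mathcal{R}(X_{\infty}) = 0$ from $\mathcal{R}(X) = 0$ and regrouping the quadratic term then gives the Sylvester equation $A_{X}^{T}\Delta + \Delta(A - BB^{T}X_{\infty}) = 0$ for $\Delta := X - X_{\infty}$; both coefficient matrices are stable, so $e^{tA_{X}^{T}}\Delta e^{t(A - BB^{T}X_{\infty})}$ is constant in $t$ and tends to $0$ as $t \to \infty$, whence $\Delta = 0$ and $X = X_{\infty}$.

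I expect the main obstacle to be the two nondegeneracy facts that couple the hypotheses to the construction, namely that $\mathcal{H}$ has no purely imaginary eigenvalue (which uses both stabilizability and detectability) and that the block $U$ is nonsingular (which uses stabilizability); the remainder is routine manipulation with the symplectic form and with the integral representation of Theorem~\ref{thm:ch_adre:ale_existence}.
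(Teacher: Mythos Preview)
The paper does not prove this theorem; it is stated as a citation from \cite[Lem.~2.4.1, Cor.~2.4.3]{AboFIJ03} and \cite[Ch.~10]{KnoK85} with no argument given. Your proposal is therefore not competing with anything in the paper, and what you have written is a correct and essentially complete version of the classical Hamiltonian invariant-subspace proof that those references contain: the Hautus tests rule out imaginary eigenvalues of $\mathcal H$, the stable graph subspace yields $X_\infty = VU^{-1}$, isotropy gives symmetry, the closed-loop Lyapunov rewriting together with Theorem~\ref{thm:ch_adre:ale_existence} gives $X_\infty\succcurlyeq 0$, and the Sylvester difference equation with two stable coefficient matrices gives uniqueness among symmetric positive semidefinite solutions.

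Two small points worth tightening if you write this out in full. First, you invoke the Lagrangian identity $U^{T}V=V^{T}U$ only \emph{after} inverting $U$, but the cleanest proof that $U$ is nonsingular actually uses that identity beforehand: for $x\in\ker(U)$ one gets $x^{T}M^{T}U^{T}Vx=-\lVert B^{T}Vx\rVert_2^{2}$, and $U^{T}V=V^{T}U$ turns the left side into $x^{T}M^{T}V^{T}Ux=0$, whence $B^{T}Vx=0$, $\ker(U)$ is $M$-invariant, and an eigenvector of $M|_{\ker(U)}$ produces an anti-stable uncontrollable mode. So prove isotropy of the stable subspace first (your Sylvester argument $M^{T}W+WM=0$ with $W=U^{T}V-V^{T}U$ does not need $U^{-1}$), then invert $U$. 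Second, when you pick a \emph{real} basis $\begin{bmatrix}U\\V\end{bmatrix}$ you are using that the stable invariant subspace of the real matrix $\mathcal H$ is conjugation-invariant; this is immediate but should be said.
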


\begin{theorem}[Range of the Solution of the \ARE{}~\eqref{eqn:ch_adre:are},~{\cite[Thm. 3.2]{BehBH18b}}]\label{thm:ch_adre:are_range}
    Let $(A,B)$ be stabilizable, $(A,C)$ be detectable and $X_{\infty} \in \Rmat{n}{n}$ be the unique stabilizing solution of the \ARE{}~\eqref{eqn:ch_adre:are}.
    Then the following relation holds:
    \begin{align*}
        \range\left(X_{\infty}\right) = \mathcal{K}\left(A^T, C^T\right).
    \end{align*}
\end{theorem}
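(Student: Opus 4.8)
The plan is to show that $\ker(X_{\infty})$ equals $\mathcal{N}:=\mathcal{K}(A^{T},C^{T})^{\perp}$; since $X_{\infty}$ is symmetric, this is equivalent to the asserted identity $\range(X_{\infty})=\mathcal{K}(A^{T},C^{T})$. As a preliminary step, note that $v^{T}(A^{T})^{k}C^{T}=(CA^{k}v)^{T}$, so $\mathcal{N}=\bigcap_{k\ge 0}\ker(CA^{k})$, which by the Cayley--Hamilton theorem is precisely the largest $A$-invariant subspace contained in $\ker(C)$. I would then prove $\ker(X_{\infty})=\mathcal{N}$ by two inclusions.

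First, $\ker(X_{\infty})\subseteq\mathcal{N}$. Let $v\in\ker(X_{\infty})$; since $X_{\infty}$ is positive semidefinite, $X_{\infty}v=0$. Multiplying the \ARE{}~\eqref{eqn:ch_adre:are} by $v^{T}$ from the left and $v$ from the right makes every term vanish except $\norm{Cv}_{2}^{2}$ (using $X_{\infty}v=0$ and the symmetry of $X_{\infty}$), so $v\in\ker(C)$. Next, rewriting the \ARE{} with the closed-loop matrix $A_{\infty}:=A-BB^{T}X_{\infty}$ gives the Lyapunov identity $A_{\infty}^{T}X_{\infty}+X_{\infty}A_{\infty}=-(C^{T}C+X_{\infty}BB^{T}X_{\infty})$; evaluating it on $v$ and using $Cv=0$, $X_{\infty}v=0$ yields $X_{\infty}A_{\infty}v=0$, and since $A_{\infty}v=Av$ (again $X_{\infty}v=0$), we get $Av\in\ker(X_{\infty})$. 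Hence $\ker(X_{\infty})$ is an $A$-invariant subspace of $\ker(C)$, so $\ker(X_{\infty})\subseteq\mathcal{N}$.

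Second, $\mathcal{N}\subseteq\ker(X_{\infty})$. I would pass to a Kalman observability decomposition: pick an orthogonal $T=[\,T_{1}\ T_{2}\,]$ with $\range(T_{2})=\mathcal{N}$. Since $\mathcal{N}$ is $A$-invariant and $\mathcal{N}\subseteq\ker(C)$, the transformed data take the form
\begin{align*}
T^{T}AT=\begin{bmatrix}\tilde A_{11} & 0\\ \tilde A_{21} & \tilde A_{22}\end{bmatrix},\qquad
CT=\begin{bmatrix}\tilde C_{1} & 0\end{bmatrix},\qquad
T^{T}B=\begin{bmatrix}\tilde B_{1}\\ \tilde B_{2}\end{bmatrix},
\end{align*}
with $(\tilde A_{11},\tilde C_{1})$ observable. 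Using the Hautus test one checks that $(\tilde A_{11},\tilde B_{1})$ is again stabilizable (a left eigenvector of $\tilde A_{11}$ certifying a failure, padded with zeros, would certify one for $T^{T}AT$) and that $\spektrum{\tilde A_{22}}\subseteq\Cm$ because $(A,C)$ is detectable. By \autoref{thm:ch_adre:are_existence} the reduced \ARE{} $\tilde A_{11}^{T}Y+Y\tilde A_{11}-Y\tilde B_{1}\tilde B_{1}^{T}Y+\tilde C_{1}^{T}\tilde C_{1}=0$ has a unique stabilizing solution $Y$, positive semidefinite. A direct block computation shows $\widehat{X}:=\begin{bmatrix}Y & 0\\ 0 & 0\end{bmatrix}$ solves the transformed \ARE{}, and its closed-loop matrix is block lower triangular with diagonal blocks $\tilde A_{11}-\tilde B_{1}\tilde B_{1}^{T}Y$ and $\tilde A_{22}$, hence stable. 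So $\widehat{X}$ is the unique stabilizing positive semidefinite solution of the transformed \ARE{} (that system being again stabilizable and detectable), whence $\widehat{X}=T^{T}X_{\infty}T$ and $X_{\infty}=T_{1}YT_{1}^{T}$; in particular $X_{\infty}T_{2}=0$, i.e.\ $\mathcal{N}=\range(T_{2})\subseteq\ker(X_{\infty})$.

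The algebraic rearrangements of the \ARE{} in the first inclusion are routine. The main obstacle is the second inclusion: one must carefully confirm that the reduced system inherits stabilizability and detectability so that \autoref{thm:ch_adre:are_existence} applies, and then recognize the block-diagonal lift as the unique stabilizing solution. An alternative route would be to represent $X_{\infty}$ as a limit of \DRE{} solutions whose ranges stay inside $\mathcal{K}(A^{T},C^{T})$, but the decomposition argument above has the advantage of being self-contained.
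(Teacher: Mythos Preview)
Your proof is correct. Note that the paper does not actually prove this theorem in the text; it is cited from~\cite[Thm.~3.2]{BehBH18b}, with the subsequent remarks pointing to~\cite[Lem.~2.4.9]{AboFIJ03} for the inclusion $\mathcal{K}(A^T,C^T)\subseteq\range(X_\infty)$ and to~\cite[Ch.~3.3]{AmoB10} for the rank equality via a Kalman decomposition. Your two inclusions reproduce exactly these two ingredients: the first is the classical argument that $\ker(X_\infty)$ is $A$-invariant and contained in $\ker(C)$ (equivalently $\mathcal{K}(A^T,C^T)\subseteq\range(X_\infty)$), and the second is the Kalman observability decomposition combined with the uniqueness of the stabilizing solution. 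So your approach is essentially the one the paper alludes to, made fully explicit.
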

The inclusion $\mathcal{K}\left(A^T, C^T\right) \subseteq \range{ \left(X_{\infty}\right)}$ in Theorem~\ref{thm:ch_adre:are_range} is actually true for each symmetric solution of the ARE~\eqref{eqn:ch_adre:are}, cf.~\cite[Lemma 2.4.9]{AboFIJ03}.
In~\cite[Ch. 3.3]{AmoB10} a Kalman decomposition is used to show that $\rank\left( X_{\infty} \right) = \dim\left( \mathcal{K}\left(A^T, C^T\right) \right)$.
A connection between the space $\mathcal{K}\left(A^T,C^T\right)$ and a certain Krylov subspace generated by the associated Hamiltonian matrix, which can be used for numerical approximation of the solution of the ARE~\eqref{eqn:ch_adre:are},
was presented in~\cite[Thm. 10]{BenB16}.

Typically, solutions of quadratic differential equations like the \DRE~\eqref{eqn:ch_adre:auto_dre} exhibit a finite-time escape phenomena.
By means of comparison arguments and the fact that $-BB^T$ is negative semidefinite one can show that the solution exists for all $t\geq0$.
With additional assumptions, the solution converges monotonically to the unique solution of \ARE~\eqref{eqn:ch_adre:are} and is, thus, bounded.


\begin{theorem}[Existence and Uniqueness of Solutions of the \DRE{}~\eqref{eqn:ch_adre:auto_dre},~{\cite[Thm. 4.1.6, 4.1.8]{AboFIJ03},~\cite[Ch. 10]{KnoK85}}]\label{thm:ch_adre:dre_existence}
    The \DRE~\eqref{eqn:ch_adre:auto_dre} has a unique solution $X\colon \interor{0}{\infty}\to \Rmat{n}{n}$. The solution $X$ has the following properties:
    \begin{itemize}
        \item $X(t)$ is symmetric positive semidefinite for all $t \geq 0$.
        \item If $\dot{X}(0)=\mathcal R(X_0)\succcurlyeq 0$, then $t\mapsto X(t)$ is monotonically non-decreasing on $\interor{0}{\infty}$,
            i.e. \mbox{$X(t_1)\preccurlyeq X(t_2)$} for all $t_1,t_2$ such that $0\leq t_1\leq t_2$.
    \end{itemize}
\end{theorem}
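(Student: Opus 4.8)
The plan is to combine elementary ODE theory with comparison arguments adapted to the matrix structure of $\mathcal{R}$. First I would establish local existence and uniqueness: the right-hand side of~\eqref{eqn:ch_adre:auto_dre_eqn} is a matrix polynomial in $X$, hence locally Lipschitz, so the Picard--Lindel\"of theorem provides a unique solution on a maximal interval $\interor{0}{T_{\max}}$ with $0 < T_{\max} \le \infty$. Symmetry of $X(t)$ then follows from uniqueness together with the identity $\mathcal{R}(X^T) = \mathcal{R}(X)^T$ (recall that $BB^T$ and $C^TC$ are symmetric): since $X_0 = X_0^T$, the map $t \mapsto X(t)^T$ solves the same initial value problem and therefore coincides with $X$.

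Next I would prove the two one-sided bounds that simultaneously yield positive semidefiniteness and global existence. For the lower bound, use the symmetry of $X(t)$ to rewrite~\eqref{eqn:ch_adre:auto_dre_eqn} in closed-loop form: with $A_X(t) := A - BB^T X(t)$ one obtains the linear matrix differential Lyapunov equation $\dot{X} = A_X(t)^T X + X A_X(t) + (C^T C + X(t) BB^T X(t))$, whose inhomogeneity is positive semidefinite. Its variation-of-constants representation writes $X(t)$ as a sum of congruence transforms --- built from a fundamental matrix of the linear system $\dot z = A_X(t) z$, which is invertible by Liouville's formula --- of the positive semidefinite matrices $X_0$ and $C^T C + X(s) BB^T X(s)$, so $X(t) \succcurlyeq 0$ on $\interor{0}{T_{\max}}$. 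For the upper bound, let $Y$ solve the globally defined differential Lyapunov equation $\dot{Y} = A^T Y + Y A + C^T C$ with $Y(0) = X_0$; then $W := Y - X$ satisfies $\dot{W} = A^T W + W A + X(t) BB^T X(t)$ with $W(0) = 0$, hence $W(t) = \int_0^t e^{(t-s)A^T} X(s) BB^T X(s) e^{(t-s)A}\, \dx{s} \succcurlyeq 0$, i.e. $X(t) \preccurlyeq Y(t)$. Combining, $0 \preccurlyeq X(t) \preccurlyeq Y(t)$ forces $\norm{X(t)}_2 = \LambdaK{1}(X(t)) \le \LambdaK{1}(Y(t)) = \norm{Y(t)}_2$, which is bounded on every compact subinterval of $\interor{0}{T_{\max}}$; by the standard continuation (blow-up) alternative for ODEs this excludes finite-time escape, so $T_{\max} = \infty$.

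It remains to prove the monotonicity statement under the assumption $\mathcal{R}(X_0) \succcurlyeq 0$. Because the right-hand side of the \DRE{} is $C^\infty$, the solution is $C^\infty$ in $t$; differentiating~\eqref{eqn:ch_adre:auto_dre_eqn} and setting $Z := \dot{X}$ gives the homogeneous linear matrix Lyapunov equation $\dot{Z} = A_X(t)^T Z + Z A_X(t)$ with $Z(0) = \mathcal{R}(X_0)$. Its solution is a single congruence transform of $Z(0)$ by an invertible fundamental matrix, so $Z(0) \succcurlyeq 0$ implies $\dot{X}(t) = Z(t) \succcurlyeq 0$ for all $t \ge 0$. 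Integrating, $X(t_2) - X(t_1) = \int_{t_1}^{t_2} \dot{X}(s)\, \dx{s} \succcurlyeq 0$ for $0 \le t_1 \le t_2$, which is the claimed Loewner monotonicity.

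The step I expect to be the main obstacle is the global-existence part. Quadratic matrix differential equations generically blow up in finite time, so one has to exploit the sign-definiteness of the ``bad'' term $-X BB^T X$ carefully: absorbing it into a closed-loop Lyapunov flow for the lower bound, discarding it for the upper bound, and --- crucially --- justifying the variation-of-constants formulas for the time-varying linear Lyapunov equations together with the blow-up alternative that converts the a priori bound into global existence. By contrast, symmetry and monotonicity are short once the uniqueness and differentiation tricks have been identified.
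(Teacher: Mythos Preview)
The paper does not give its own proof of this theorem; it is stated as a cited result from \cite[Thm.~4.1.6, 4.1.8]{AboFIJ03} and \cite[Ch.~10]{KnoK85}, with only the one-sentence hint preceding the statement that ``by means of comparison arguments and the fact that $-BB^T$ is negative semidefinite one can show that the solution exists for all $t\geq 0$.'' There is therefore no in-paper argument to compare against.

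That said, your proposal is a correct and complete proof, and it is precisely the standard argument one finds in the cited references. Your closed-loop rewriting $\dot X = A_X(t)^T X + X A_X(t) + \bigl(C^TC + XBB^TX\bigr)$ for the lower bound, the Lyapunov comparison $X(t)\preccurlyeq Y(t)$ for the upper bound, and the differentiated equation $\dot Z = A_X(t)^T Z + Z A_X(t)$ for monotonicity are exactly the ``comparison arguments exploiting the sign of $-BB^T$'' alluded to in the paper. The algebra checks out in each step (in particular, the identity $\mathcal R(X) = A_X^T X + X A_X + XBB^TX + C^TC$ and the derivative $\tfrac{d}{dt}\mathcal R(X) = A_X^T \dot X + \dot X A_X$ both rely on the symmetry of $X(t)$, which you have already established). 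Your identification of the global-existence step as the crux is accurate: the two one-sided bounds together with the blow-up alternative are indeed the heart of the matter, while symmetry and monotonicity are short consequences of uniqueness and linearity once the relevant reformulations are in hand.
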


\begin{theorem}[Invariant Subspace of the Solution of the \DRE{}~\eqref{eqn:ch_adre:auto_dre}, cp.~{\cite[Thm. 3.1]{BehBH18b}}]\label{thm:ch_adre:dre_invariant_subspace}
    Let the columns of $Q\in \Rmat{n}{p}$ span an orthonormal basis of $\mathcal{K}\left(A^T, C^T\right)$ and define
    the linear space $\mathcal{Q}:= \left\{ Q Y Q^T \mid Y\in \Rmat{p}{p}\right\}\subseteq \Rmat{n}{n}$ or $\mathcal{Q}:=\{0\}\subseteq \Rmat{n}{n}$, if $C=0$. Then the following holds:
    \begin{align*}
        X(t) \in \mathcal{Q}\ \text{for all}\ t\geq 0,
    \end{align*}
    where $X$ is the unique solution of the~\DRE{}~\eqref{eqn:ch_adre:auto_dre} with $X_0 = 0$.
\end{theorem}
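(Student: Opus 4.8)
The plan is to show that the \DRE{}~\eqref{eqn:ch_adre:auto_dre} with $X_0=0$ is, after the orthogonal change of variables induced by $Q$, nothing but a \DRE{} of size $p$, whose globally existing solution, pulled back through $Q$, solves the original equation; the uniqueness statement in \autoref{thm:ch_adre:dre_existence} then forces the solution to stay in $\mathcal{Q}$.

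First I would dispose of the degenerate case $C=0$: then $\mathcal R(0)=0$, so $X\equiv 0$ solves~\eqref{eqn:ch_adre:auto_dre} with $X_0=0$, and by uniqueness it is \emph{the} solution, giving $X(t)\in\{0\}=\mathcal Q$. Assume now $C\neq 0$, so the columns of $Q\in\Rmat{n}{p}$ form an orthonormal basis of $\mathcal{K}(A^T,C^T)$ and $Q^TQ=I_p$. The two structural ingredients are: \emph{(i)} $\range(Q)=\mathcal{K}(A^T,C^T)$ is $A^T$--invariant (recalled in \autoref{sec:preliminiaries}), which together with $Q^TQ=I_p$ yields $A^TQ=Q\widehat A^T$ for $\widehat A:=Q^TAQ\in\Rmat{p}{p}$; and \emph{(ii)} $\range(C^T)\subseteq\mathcal{K}(A^T,C^T)=\range(Q)$, since $C^T$ is the first block of the Krylov matrix, so that $C^T=QQ^TC^T=Q\widehat C^T$ for $\widehat C:=CQ\in\Rmat{c}{p}$. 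Set also $\widehat B:=Q^TB\in\Rmat{p}{b}$.

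Next I would introduce the reduced equation $\dot Y=\widehat A^TY+Y\widehat A-Y\widehat B\widehat B^TY+\widehat C^T\widehat C$ with $Y(0)=0$; since $Y(0)$ is symmetric positive semidefinite, \autoref{thm:ch_adre:dre_existence} applied to this smaller equation provides a unique solution $Y\colon\interor{0}{\infty}\to\Rmat{p}{p}$. The key computation is then the substitution $Z(t):=QY(t)Q^T$ into~\eqref{eqn:ch_adre:auto_dre_eqn}: using \emph{(i)} one gets $A^TZ=Q\widehat A^TYQ^T$ and $ZA=QY\widehat AQ^T$ (because $Q^TA=\widehat AQ^T$); one has $ZBB^TZ=QY(Q^TBB^TQ)YQ^T=QY\widehat B\widehat B^TYQ^T$; and using \emph{(ii)} one has $C^TC=Q\widehat C^T\widehat CQ^T$. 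Hence $\mathcal R(Z)=Q\big(\widehat A^TY+Y\widehat A-Y\widehat B\widehat B^TY+\widehat C^T\widehat C\big)Q^T=Q\dot YQ^T=\dot Z$ with $Z(0)=0$, so $Z$ is a solution of~\eqref{eqn:ch_adre:auto_dre} with $X_0=0$. The uniqueness part of \autoref{thm:ch_adre:dre_existence} then gives $X=Z$, i.e. $X(t)=QY(t)Q^T\in\mathcal Q$ for all $t\geq0$.

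I expect no serious obstacle here; the only points needing care are the two identities \emph{(i)} and \emph{(ii)} and the bookkeeping in the substitution. A slightly more conceptual variant I could use instead is to observe directly that $\mathcal Q$ is a finite-dimensional, hence closed, linear subspace of $\Rmat{n}{n}$ with $\mathcal R(\mathcal Q)\subseteq\mathcal Q$, so that $\dot Z=\mathcal R(Z)$, $Z(0)=0$, is a well-posed polynomial ODE on $\mathcal Q$; its maximal solution solves~\eqref{eqn:ch_adre:auto_dre}, coincides with $X$ by uniqueness, cannot leave the closed set $\mathcal Q$, and is global by \autoref{thm:ch_adre:dre_existence}, so again $X(t)\in\mathcal Q$ for all $t\geq0$. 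In this variant the main thing to verify is still $\mathcal R(\mathcal Q)\subseteq\mathcal Q$, which reduces to exactly \emph{(i)} and \emph{(ii)}.
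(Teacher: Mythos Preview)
The paper does not actually prove this theorem; it is stated with a citation to~\cite[Thm.~3.1]{BehBH18b} and used without proof. So there is no ``paper's own proof'' to compare against.

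Your argument is correct. The two structural facts you isolate are exactly the right ones: $A^T$--invariance of $\mathcal{K}(A^T,C^T)$ gives $A^TQ=Q\widehat A^T$, and $\range(C^T)\subseteq\range(Q)$ gives $C^T=Q\widehat C^T$. With these, your substitution $Z(t)=QY(t)Q^T$ goes through cleanly, and the appeal to uniqueness in \autoref{thm:ch_adre:dre_existence} finishes the job. The handling of the degenerate case $C=0$ is also fine. Your alternative phrasing via $\mathcal R(\mathcal Q)\subseteq\mathcal Q$ is equivalent and perhaps slightly cleaner conceptually, but as you note it reduces to the same two identities.

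One minor remark on bookkeeping: when you write $Q^TA=\widehat AQ^T$, this is obtained by transposing $A^TQ=Q\widehat A^T$, which is correct; it may be worth stating that step explicitly since it is the only place where the direction of the invariance is used ``from the right''.
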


With this relation, one can readily confirm that the solution of the \DRE{}~\eqref{eqn:ch_adre:auto_dre} evolves in an invariant subspace of $\Rmat{n}{n}$.

For numerical approximations of the solutions of large-scale \ALE{s}, \ARE{s} and \DRE{s}, one typically seeks for low-rank approximations, i.e.\ a $q\ll n$  so that the relation in Theorem~\ref{thm:ch_adre:dre_invariant_subspace} is still valid up to a given tolerance, to avoid overly demanding memory requirements.
Therefore, the relevant literature features numerous contributions which study the decay rate of $\LambdaK{k}(X)$ or $\frac{\LambdaK{k}(X)}{\LambdaK{1}(X)}$ for increasing $k$; see, e.g.,~\cite{BecT17,BakES15,AntSZ02,GruK14,Pen00,Opm15,Gra04,SorZ02} on the eigenvalue decay of the solution of the \ALE{} and~\cite{Opm15,BenB16,Sti18} for results on the \ARE{} and \DRE{}.

For the autonomous \DRE{}~\eqref{eqn:ch_adre:auto_dre}, one can derive estimates based on the monotonicity. Assume that $\mathcal{R}(X_0)\succcurlyeq 0$, then by Theorem~\ref{thm:ch_adre:dre_existence} the function $t\mapsto X(t)$ is monotonically non-decreasing on $\interor{0}{\infty}$, where $X$ is the unique
solution of the \DRE{}~\eqref{eqn:ch_adre:auto_dre}.
A direct consequence of the Courant-Fischer-Weyl min-max principle~\cite[Cor. 7.7.4]{HorJ85} implies that $t\mapsto \LambdaK{k}(X(t))$ is also monotonically non-decreasing on $\interor{0}{\infty}$.
Therefore the number of eigenvalues of $X(t)$ greater than or equal to a given threshold $\vareps >0$ is non-decreasing over time.

\begin{example}[Eigenvalue Decay]\label{ex:eigenvalue_decay}
    We illustrate this by an example in \textup{Figure~\ref{fig:ch_adre:eig_decay_dre_time}}.
We have chosen $X_0 = 0$, $C=\begin{bmatrix} 1,\ldots, 1 \end{bmatrix}=B^T$ and $A$ to be tridiagonal with entries $5, -1, -5$ on the subdiagonal, diagonal and superdiagonal, respectively.
The matrices are of size $n=100$ and the \DRE{} was solved numerically to a high precision on the time interval $\interc{0}{15}$.
For this we have used the variable-precision arithmetic \textup{\texttt{vpa}} of \textup{\matlab{} 2018a} with $512$ significant digits and \textup{Algorithm~\ref{alg:nonsymdre:mod_davison_maki}} with step size $h=2^{-5}$.
The eigenvalues of $X(t)$ are arranged in a non-increasing order and plotted for $t\in \{0.5, 1, \ldots, 15\}$. The functions $t\mapsto \LambdaK{k}(X(t))$ are highlighted in red for $k\in \{10,20,30,40,50\}$.
All eigenvalues below $10^{-60}$ were truncated from \textup{Figure~\ref{fig:ch_adre:eig_decay_dre_time}}.
The shadowed red plane is drawn at the level $2\cdot10^{-16}$, which is approximately machine precision in double arithmetic.

\begin{figure}[H]
    \centering
    \includegraphics{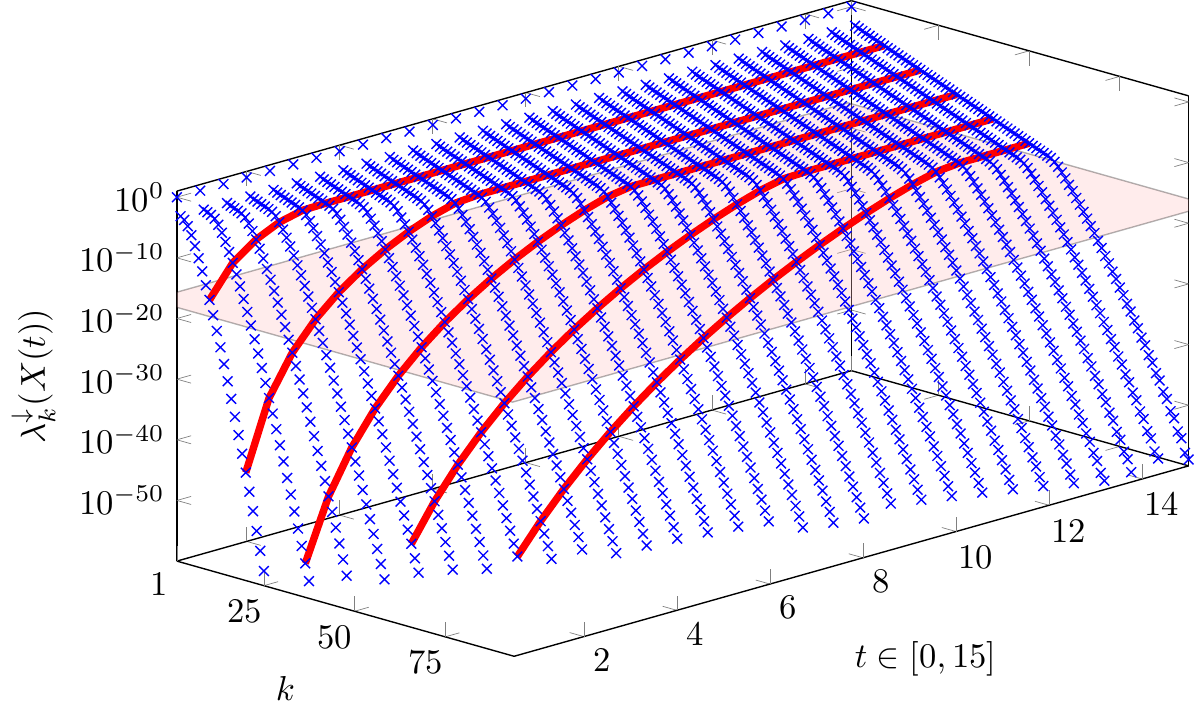}
    \caption{Eigenvalues $\LambdaK{k}(X(t))$ of the numerical solution of \DRE{}~\eqref{eqn:ch_adre:auto_dre}.}\label{fig:ch_adre:eig_decay_dre_time}
\end{figure}

\end{example}

\section{Radon's Lemma}\label{sec:randons_lemma}
In this section, we consider the non-symmetric differential Riccati equation abbreviated by \NDRE{} as a generalization of the \DRE{}.
We will make heavy use of \emph{Radon's Lemma} that shows that the \NDRE{} is locally equivalent to a linear differential equation of twice the size.
Vice versa, the solution of the \NDRE{} defines the solution of an associated linear system.

\emph{Radon's Lemma}~(Thm.~\ref{thm:radon}) has several consequences.
In Section~\ref{sec:radons_lemma:flow_on_the_grassmanian_manifolds}, we review the fact that the solution of the \NDRE{} induces a flow on the Grassmanian manifold.
This flow has a simpler structure as it is based on a matrix exponential.
In Section~\ref{sec:radons_lemma:solution_formulas} we show how solution formulas can be obtained by applying suitable linear transformations, which decouple the
linear differential equation.
Then, in view of numerical approximation, we review the \emph{Davison-Maki method} and the \emph{modified Davison-Maki method} in Section~\ref{sec:radons_lemma:davison_maki_methods}.
We use the solution formula from Section~\ref{sec:radons_lemma:solution_formulas} to explain, why the \emph{Davison-Maki method} applied to the \DRE{} usually suffers from numerical instabilities and show that an exploitation of the structure of the transformed flow on the Grassmanian manifold leads to a suitable modification of the \emph{Davison-Maki method}.  

\begin{theorem}[Radon's Lemma,~{\cite[Thm. 3.1.1]{AboFIJ03}}]\label{thm:radon}
    Let $M_{11}\in \Rmat{n}{n},\ M_{12}\in \Rmat{n}{m},\ M_{21},M_0\in \Rmat{m}{n},\ M_{22}\in \Rmat{m}{m}$ and $\mathbb{I}\subseteq \Rfield$ be an open interval such that $0\in \mathbb{I}$. We consider the \NDRE{}
    \begin{subequations}\label{eqn:nonsymdre}
        \begin{align}
            \dot{W}(t) & = M_{22} W(t) - W(t) M_{11} - W(t) M_{12}(t) W(t) + M_{21},     \label{eqn:nonsymdre_eqn}     \\
            W(0)       & = M_{0}.                                                        \label{eqn:nonsymdre_initial}
        \end{align}
    \end{subequations}
    The following holds:
    \begin{enumerate}
        \item Let $W\colon \mathbb{I} \to \Rmat{m}{n}$ be the solution of~\eqref{eqn:nonsymdre} and $U\colon \mathbb{I} \to \Rmat{n}{n}$ be the solution of the linear initial value problem
            \begin{align}
                \dot{U}(t) = \left( M_{11} + M_{12} W(t)\right) U(t),\ U(0)  = I_n. \label{eqn:nonsym_dreU}
            \end{align}
            Moreover let $V(t):=W(t)U(t)$. Then $U\colon \mathbb{I} \to \Rmat{n}{n}$ and $ V \colon \mathbb{I} \to \Rmat{m}{n}$ define the solution of
            \begin{align}
                \begin{bmatrix}\dot{U}(t) \\ \dot{V}(t) \end{bmatrix} =
                M \begin{bmatrix} U(t) \\ V(t) \end{bmatrix} :=
                \begin{bmatrix} M_{11} & M_{12} \\ M_{21} & M_{22} \end{bmatrix} \begin{bmatrix} U(t) \\ V(t) \end{bmatrix},\ \begin{bmatrix}U(0) \\ V(0) \end{bmatrix} =
                \begin{bmatrix} I_{n} \\ M_{0} \end{bmatrix}.                                                                \label{eqn:nonsymhami}
            \end{align}
        \item If $\begin{bmatrix} U \\ V \end{bmatrix} \colon \mathbb{I} \to \Rmat{(n+m)}{n}$ is a solution of~\eqref{eqn:nonsymhami} and the matrix $U(t)$ is nonsingular for all $t\in \mathbb{I}$, then
            $W\colon \mathbb{I} \to \mbR{m\times n},\ W(t) = V(t) {U(t)}^{-1}$ is a solution of~\eqref{eqn:nonsymdre}.
    \end{enumerate}
\end{theorem}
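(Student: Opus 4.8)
The plan is to prove the two parts essentially by direct differentiation, using the product rule and the defining differential equations, and appealing to existence/uniqueness for linear ODEs to make the constructions well-posed. For part~1, I would first observe that since $W$ is the solution of the \NDRE{} on $\mathbb{I}$, the matrix-valued map $t\mapsto M_{11}+M_{12}W(t)$ is continuous (indeed as smooth as $W$) on $\mathbb{I}$, so the linear initial value problem~\eqref{eqn:nonsym_dreU} for $U$ has a unique solution $U\colon\mathbb{I}\to\Rmat{n}{n}$; this gives the first block equation $\dot U = M_{11}U + M_{12}(WU) = M_{11}U+M_{12}V$ by definition of $V=WU$, and the initial condition $U(0)=I_n$, $V(0)=W(0)U(0)=M_0$ is immediate. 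The core computation is the second block equation: differentiating $V=WU$ with the product rule gives $\dot V = \dot W U + W\dot U$. I substitute $\dot W$ from~\eqref{eqn:nonsymdre_eqn} and $\dot U$ from~\eqref{eqn:nonsym_dreU}:
\begin{align*}
    \dot V &= \bigl(M_{22}W - WM_{11} - WM_{12}W + M_{21}\bigr)U + W\bigl(M_{11}+M_{12}W\bigr)U.
\end{align*}
The terms $-WM_{11}U$ and $+WM_{11}U$ cancel, and the terms $-WM_{12}WU$ and $+WM_{12}WU$ cancel, leaving $\dot V = M_{22}WU + M_{21}U = M_{21}U + M_{22}V$, which is exactly the second block of~\eqref{eqn:nonsymhami}. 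Hence $(U,V)$ solves the linear system with the stated initial data.

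For part~2, suppose $\begin{bmatrix}U\\V\end{bmatrix}$ solves~\eqref{eqn:nonsymhami} with $U(t)$ nonsingular for all $t\in\mathbb{I}$, and set $W(t)=V(t)U(t)^{-1}$. Here I would use the standard identity for the derivative of an inverse, $\frac{d}{dt}U^{-1} = -U^{-1}\dot U\, U^{-1}$, valid since $t\mapsto U(t)^{-1}$ is differentiable wherever $U$ is invertible. Then
\begin{align*}
    \dot W &= \dot V\,U^{-1} - V U^{-1}\dot U\, U^{-1}
           = \bigl(M_{21}U + M_{22}V\bigr)U^{-1} - W\bigl(M_{11}U + M_{12}V\bigr)U^{-1},
\end{align*}
using $\dot U = M_{11}U+M_{12}V$ and $\dot V = M_{21}U+M_{22}V$ from the block system. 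Expanding and writing $V U^{-1}=W$ throughout gives $\dot W = M_{21} + M_{22}W - WM_{11} - WM_{12}W$, which is~\eqref{eqn:nonsymdre_eqn}; and $W(0)=V(0)U(0)^{-1}=M_0 I_n^{-1}=M_0$, so~\eqref{eqn:nonsymdre_initial} holds as well.

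I do not anticipate a serious obstacle: both directions are bookkeeping with the product and inverse rules, and all the nonlinear terms cancel in a mechanical way. The only points requiring a word of care are regularity and well-posedness — that the coefficient matrix in~\eqref{eqn:nonsym_dreU} is continuous so that $U$ exists uniquely on all of $\mathbb{I}$, and that $U(t)^{-1}$ is differentiable in part~2 — but these follow from standard linear ODE theory and the hypothesis that $U$ is nonsingular throughout $\mathbb{I}$. I would also note in passing that in part~1 the solution $U$ is automatically nonsingular on $\mathbb{I}$ (being the fundamental solution of a linear system), so the two parts are genuinely inverse to each other.
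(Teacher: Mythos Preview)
Your proof is correct: both directions are exactly the product-rule and inverse-derivative computations you outline, and the cancellations go through as stated. Note, however, that the paper does not give its own proof of this theorem---it is quoted from~\cite[Thm.~3.1.1]{AboFIJ03}---so there is no in-paper argument to compare against; your direct-verification approach is the standard one and matches what one finds in the cited reference.
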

Radon's Lemma~(Thm.~\ref{thm:radon}) also holds for time-dependent continuous matrix valued functions as coefficients.
Note that, usually, the solution of the \NDRE{}~\eqref{eqn:nonsymdre} has finite time escape, while the solution of system~\eqref{eqn:nonsymhami} exists for all $t\in \Rfield$.
However, one can consider the solution $W$ of the \NDRE{}~\eqref{eqn:nonsymdre} on the interval of existence.
As the function $U$ is a solution of the linear initial value problem~\eqref{eqn:nonsym_dreU} and $U(0)=I_n$ is nonsingular, the determinant of $U(t)$ can not vanish on the interval $\mathbb{I}$.
It follows that the matrix $U(t)$ is nonsingular for all $t \in \mathbb{I}$, c.f.~\cite[\S 15]{Wal98}.
Therefore as long as the solution of the \NDRE{}~\eqref{eqn:nonsymdre} exists, it can be recovered from the solution of system~\eqref{eqn:nonsymhami}.

    \subsection{Flow on the Grassmanian Manifold}\label{sec:radons_lemma:flow_on_the_grassmanian_manifolds}
In this section we review the fact that the solution of the \NDRE{}~\eqref{eqn:nonsymdre} is locally equivalent to a flow on the Grassmanian manifold.
This connection was first observed in~\cite{Sch73} and the corresponding flow was further studied in~\cite{Sha86,martin1980grassmannian}.
The content of this subsection is a summary of~\cite[\S 2]{Sha86}.
One main observation from Radon's Lemma~(Thm.~\ref{thm:radon}) is that the solution $W$ of the \NDRE{}~\eqref{eqn:nonsymdre} depends only on the
linear space spanned by $U(t)$ and $V(t)$.
This can be seen by the following arguments.
Let $\begin{bmatrix} U \\ V \end{bmatrix} \colon \mathbb{I} \to \Rmat{(n+m)}{n}$ be a solution of~\eqref{eqn:nonsymhami} and $t_0 \in \mathbb{I}$.
Moreover assume that $\tilde{U}\in \Rmat{n}{n}, \tilde{V}\in \Rmat{m}{n}$ are such that
\begin{align*}
    \range\left( \begin{bmatrix} \tilde{U} \\ \tilde{V} \end{bmatrix} \right) = \range\left( \begin{bmatrix} U(t_0) \\ V(t_0) \end{bmatrix}\right).
\end{align*}
The linear spaces are equal, if and only if there is a nonsingular matrix $T\in \Rmat{n}{n}$ such that
\begin{align*}
    \begin{bmatrix} \tilde{U} \\ \tilde{V} \end{bmatrix} = \begin{bmatrix} U(t_0) \\ V(t_0) \end{bmatrix} T.
\end{align*}
Since $U(t_0)$ is nonsingular, we have
\begin{align*}
    \tilde{V}\tilde{U}^{-1} = V(t_0) T T^{-1} {U(t_0)}^{-1} = V(t_0) {U(t_0)}^{-1} = W(t_0).
\end{align*}
Consequently, it is the linear subspace $\range\left(\begin{bmatrix} U(t) \\ V(t) \end{bmatrix}\right)\subseteq \Rfield^{n+m}$ that defines the solution $W(t)$, rather than the chosen basis $\begin{bmatrix} U(t) \\ V(t) \end{bmatrix}$ to represent the space.
Since
\begin{align*}
    \range\left(\begin{bmatrix} U(t) \\ V(t) \end{bmatrix}\right) = \range\left( e^{tM} \begin{bmatrix} I_n \\ M_0 \end{bmatrix} \right) = e^{tM}\range\left( \begin{bmatrix} I_n \\ M_0 \end{bmatrix} \right),
\end{align*}
and the (nonsingular) matrix exponential is applied to an $n$-dimensional subspace $\range\left( \begin{bmatrix} I_n \\ M_0 \end{bmatrix} \right)$, we obtain a time-dependent family of $n$-dimensional subspaces of $\Rfield^{n+m}$.
The Grassmanian manifold $\Grm{n}{n+m}$ consists of  all $n$-dimensional subspaces of $\Rfield^{n+m}$.
Therefore the flow associated to the \NDRE{}~\eqref{eqn:nonsymdre} on $\Grm{n}{n+m}$ is given by
\begin{align*}
    \varphi&: \Rfield \times \Grm{n}{n+m} \to \Grm{n}{n+m},\quad \left(t,S\right) \mapsto e^{tM}S.
\end{align*}
The flow exists for all $t\in \Rfield$ and has the flow properties $\varphi(0,S)=S$ and $\varphi(t_2,\varphi(S,t_1))=\varphi(t_1+t_2,S)$ for all $S\in \Grm{n}{n+m}$ and $t_1,t_2\in \Rfield$.

In addition it holds that $U(t)$ is nonsingular as long as $W$ exists.
This motivates us to consider the set of all graph subspaces of $\Grm{n}{n+m}$
\begin{align*}
    \GrmG{n}{n+m}:= \left\{ \range\left( \begin{bmatrix} U \\ V \end{bmatrix} \right) \mymid U\in \Rmat{n}{n},~ V\in \Rmat{m}{n},~\det U \neq 0 \right\}\subseteq \Grm{n}{n+m},
\end{align*}
together with the function
\begin{align*}
    \psi: \GrmG{n}{n+m} \to \Rmat{m}{n},\quad \range\left( \begin{bmatrix} U \\ V \end{bmatrix} \right) \mapsto V U^{-1}.
\end{align*}
The function $\psi$ is well defined, as it does not depend on the basis of the graph subspace.
Thus, we have that
\begin{align*}
        W(t) &=
        \psi \left( \range\left( \begin{bmatrix} U(t) \\ V(t) \end{bmatrix}\right) \right) =
        \psi \left(\varphi\left(t, \range\left( \begin{bmatrix} I_n \\ M_0 \end{bmatrix} \right) \right) \right),
\end{align*}
and
\begin{align*}
        \psi^{-1}\left(W(t)\right) &=
        \range\left(\begin{bmatrix} I_n \\  W(t) \end{bmatrix} \right) =
        \range\left(\begin{bmatrix} I_n \\ V(t){U(t)}^{-1} \end{bmatrix} \right) =
        \range\left(\begin{bmatrix} U(t) \\ V(t) \end{bmatrix} \right) \\ &=
        \varphi\left( t,  \range\left(\begin{bmatrix} I_n \\ M_0 \end{bmatrix} \right) \right),
\end{align*}
as long as the solution $W$ exists. Therefore the solution of the \NDRE{}~\eqref{eqn:nonsymdre} induces a flow on the Grassmanian manifold. The solution $W$ can be recovered from the flow by using $\psi$,
and, vice versa, the flow can be obtained from the solution of the \NDRE{}~\eqref{eqn:nonsymdre} using $\psi^{-1}$.

    \subsection{Solution Formulas}\label{sec:radons_lemma:solution_formulas}

Radon's Lemma~(Thm.~\ref{thm:radon}) enables a certain solution representations for the \DRE~\eqref{eqn:ch_adre:auto_dre}:
Theorem~\ref{thm:ch_adre:dre_existence} ensures that the \DRE{}~\eqref{eqn:ch_adre:auto_dre} has a unique solution for $t\geq 0$.
By Radon's Lemma~(Thm.~\ref{thm:radon}) we have that $U(t)$ is nonsingular for all $t \geq 0 $.

Let $H:= \begin{bmatrix} A & -BB^{T} \\ -C^{T}C & -A^T \end{bmatrix} \in \Rmat{2n}{2n}$ be the Hamiltonian matrix corresponding to the \DRE{}~\eqref{eqn:ch_adre:auto_dre}.
The matrices $U(t)$ and $V(t)$ are determined by the linear initial value problem
\begin{align}
    \begin{bmatrix} \dot{U}(t) \\ \dot{V}(t) \end{bmatrix} & = -H
    \begin{bmatrix} U(t) \\ V(t) \end{bmatrix}, \
    \begin{bmatrix}U(0)\\ V(0)\end{bmatrix} = \begin{bmatrix} I_n \\ X_0 \end{bmatrix}. \label{eqn:HamiltonianUV}
\end{align}
We obtain
\begin{align*}
    \begin{bmatrix} U(t) \\ V(t) \end{bmatrix}  = e^{-tH} \begin{bmatrix} I_n \\ X_0 \end{bmatrix}.
\end{align*}
The strategy is to decompose the Hamiltonian matrix $H$, such that~\eqref{eqn:HamiltonianUV} decouples.

\begin{theorem}[Solution representation \rz{I} for \DRE~\eqref{eqn:ch_adre:auto_dre},~{\cite{Rus88}}]\label{thm:first_sol_formula}
    Let $X\in \Rmat{n}{n}$ be any solution of the \ARE{}~\eqref{eqn:ch_adre:are}.
    Then the solution of the \DRE~\eqref{eqn:ch_adre:auto_dre} for $t \geq 0$  is given by
    \begin{align*}
        X(t)      & \phantom{:}= X - e^{t{(A-BB^T X^T)}^T}\tilde{X} {\left( I_n - \int\limits_{0}^{t} e^{s(A-BB^T X)} BB ^T e^{s{(A-BB^T X^T)}^T}\dx{s} \tilde{X} \right)}^{-1} e^{t(A-BB^T X)}, \\
        \tilde{X} & := X-X_0.
     \end{align*}
\end{theorem}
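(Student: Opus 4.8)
The plan is to combine Radon's Lemma~(Theorem~\ref{thm:radon}) with a block triangularisation of the Hamiltonian matrix $H=\begin{bmatrix} A & -BB^T\\ -C^TC & -A^T\end{bmatrix}$ that is induced by the given \ARE{} solution $X$. By Theorem~\ref{thm:ch_adre:dre_existence} the \DRE{}~\eqref{eqn:ch_adre:auto_dre} has a unique solution on $\interor{0}{\infty}$, and by Radon's Lemma the matrix $U(t)$ from~\eqref{eqn:HamiltonianUV} is nonsingular for all $t\geq0$, with $\begin{bmatrix} U(t)\\ V(t)\end{bmatrix}=e^{-tH}\begin{bmatrix} I_n\\ X_0\end{bmatrix}$ and $X(t)=V(t){U(t)}^{-1}$. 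First I would transform $H$ with $T:=\begin{bmatrix} I_n & 0\\ X & I_n\end{bmatrix}$, so that $T^{-1}=\begin{bmatrix} I_n & 0\\ -X & I_n\end{bmatrix}$, and compute
\begin{align*}
    T^{-1}HT=\begin{bmatrix} A-BB^TX & -BB^T\\ -X(A-BB^TX)-C^TC-A^TX & XBB^T-A^T\end{bmatrix},
\end{align*}
whose $(2,1)$-block is exactly $-\big(A^TX+XA-XBB^TX+C^TC\big)=-\mathcal{R}(X)=0$ since $X$ solves the \ARE{}~\eqref{eqn:ch_adre:are}. Writing $F_1:=A-BB^TX$ and $F_2:=A-BB^TX^T$, so that $F_2^T=A^T-XBB^T=(A-BB^TX^T)^T$, this shows $T^{-1}HT=\begin{bmatrix} F_1 & -BB^T\\ 0 & -F_2^T\end{bmatrix}$.

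The second step is to evaluate $e^{-tH}=T\,e^{-t(T^{-1}HT)}\,T^{-1}$ through the closed form of the exponential of a block upper triangular matrix,
\begin{align*}
    e^{-t(T^{-1}HT)}=\begin{bmatrix} e^{-tF_1} & \displaystyle\int_{0}^{t}e^{-(t-s)F_1}BB^Te^{sF_2^T}\dx{s}\\ 0 & e^{tF_2^T}\end{bmatrix},
\end{align*}
and to apply it to the initial data. Since $T^{-1}\begin{bmatrix} I_n\\ X_0\end{bmatrix}=\begin{bmatrix} I_n\\ -\tilde X\end{bmatrix}$ with $\tilde X=X-X_0$, a short calculation gives
\begin{align*}
    U(t)&=e^{-tF_1}-\Big(\int_{0}^{t}e^{-(t-s)F_1}BB^Te^{sF_2^T}\dx{s}\Big)\tilde X=e^{-tF_1}\big(I_n-M(t)\tilde X\big),\\
    V(t)&=X\,U(t)-e^{tF_2^T}\tilde X,
\end{align*}
where $M(t):=\int_{0}^{t}e^{sF_1}BB^Te^{sF_2^T}\dx{s}$; here one uses the identity $e^{tF_1}e^{-(t-s)F_1}=e^{sF_1}$ to pull $e^{tF_1}$ inside the integral.

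For the final step I would note that $U(t)$ and $e^{-tF_1}$ are nonsingular, hence $I_n-M(t)\tilde X$ is nonsingular for all $t\geq0$ and ${U(t)}^{-1}=(I_n-M(t)\tilde X)^{-1}e^{tF_1}$. Substituting into $X(t)=V(t){U(t)}^{-1}=X-e^{tF_2^T}\tilde X\,{U(t)}^{-1}$ and recalling $F_1=A-BB^TX$ and $F_2^T=(A-BB^TX^T)^T$ yields precisely the asserted representation.

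The computations are routine linear algebra; the two points that need care are choosing the sign convention in $T$ so that the \ARE{} annihilates the $(2,1)$-block, and tracking the non-symmetry of $X$, which forces the two distinct closed-loop matrices $A-BB^TX$ and $A-BB^TX^T$ to appear. One must also resist forming $\tilde X^{-1}$, since $\tilde X=X-X_0$ need be neither invertible nor symmetric; the factor $(I_n-M(t)\tilde X)^{-1}$ with $\tilde X$ on the right is exactly what the calculation produces, and establishing its invertibility is the only genuinely nontrivial point -- it rests on the nonsingularity of $U(t)$ furnished by Radon's Lemma. Alternatively, one can set $Y(t):=X-X(t)$, reduce~\eqref{eqn:ch_adre:auto_dre} to a \DRE{} without constant term for $Y$, turn it via $Y(t)=e^{tF_2^T}Z(t)e^{tF_1}$ into the matrix Bernoulli equation $\dot Z=Z\big(e^{tF_1}BB^Te^{tF_2^T}\big)Z$, and verify the closed form $Z(t)=\tilde X(I_n-M(t)\tilde X)^{-1}$ directly by differentiation.
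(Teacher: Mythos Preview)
Your proof is correct and follows essentially the same route as the paper's: the same similarity transformation $T=\begin{bmatrix} I_n & 0\\ X & I_n\end{bmatrix}$ block-triangularises $H$, and the paper then solves the resulting decoupled system for $\tilde U,\tilde V$ by variation of constants, which is precisely your block-triangular exponential formula in disguise; the invertibility of $I_n-M(t)\tilde X$ is argued identically from the nonsingularity of $U(t)=\tilde U(t)$. The Bernoulli alternative you sketch at the end is a genuinely different (and also valid) derivation that the paper does not use.
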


\begin{proof}
    We use $T:=\begin{bmatrix} I_n & 0 \\ X & I_n \end{bmatrix}$ and apply a similarity transformation to $H$,
    \begin{align*}
        T^{-1} H T
        &=
        \begin{bmatrix} I_n & 0 \\ -X & I_n \end{bmatrix} \begin{bmatrix} A & -BB^{T} \\ -C^{T}C & -A^T \end{bmatrix} \begin{bmatrix} I_n & 0 \\ X & I_n \end{bmatrix}
        \\ &=
        \begin{bmatrix} A-B B^{T} X & -BB^T \\ 0 & -{(A-BB^T X^T)}^T \end{bmatrix}
        =:\tilde{H}.
    \end{align*}
    This gives
    \begin{align*}
        \begin{bmatrix} U(t) \\ V(t) \end{bmatrix}                       =
        e^{-tH}\begin{bmatrix} I_n \\ X_0\end{bmatrix}                   =
        e^{-tT \tilde{H}T^{-1}} \begin{bmatrix}I_n \\ X_0\end{bmatrix}   =
        T e^{-t \tilde{H}}T^{-1} \begin{bmatrix}I_n \\ X_0\end{bmatrix}  =
        T e^{-t \tilde{H}} \begin{bmatrix}I_n \\ X_0-X\end{bmatrix}      =:
        T \begin{bmatrix}\tilde{U}(t) \\ \tilde{V}(t) \end{bmatrix}.
    \end{align*}
    Clearly $\tilde{U}$ and $\tilde{V}$ are determined by the solution of the initial value problem
    \begin{align*}
        \begin{bmatrix}\dot{\tilde{U}}(t) \\ \dot{\tilde{V}}(t) \end{bmatrix} & =
        -\tilde{H}
        \begin{bmatrix} \tilde{U}(t) \\ \tilde{V}(t) \end{bmatrix}=
        \begin{bmatrix} -(A-BB^T X) & BB^{T} \\ 0 & {(A-BB^T X^T)}^{T}  \end{bmatrix}
        \begin{bmatrix} \tilde{U}(t) \\ \tilde{V}(t) \end{bmatrix},\
        \begin{bmatrix} \tilde{U}(0) \\ \tilde{V}(0) \end{bmatrix} = \begin{bmatrix} I_n \\ X_0 -X \end{bmatrix}.
        \intertext{By using the variation of constants formula~\cite[\S 18]{Wal98} we obtain that  $\tilde{U}$ and $\tilde{V}$ are given by}
        \tilde{V}(t)               & = -e^{t{(A-B B^T X^T)}^T}(X-X_0),                                                                                     \\
        \tilde{U}(t)               & = e^{-t{(A-B B^T X)}} + \int\limits_{0}^{t}  e^{-(t-s)(A-BB^T X)} BB^T \tilde{V}(s) \dx{s}                          \\
                                   & = e^{-t(A-BB^T X)} \left( I_n - \int\limits_{0}^{t} e^{s(A-BB^T X)} BB ^T e^{s{(A-BB^T X^T)}^T}\dx{s} (X-X_0)\right).
    \end{align*}
    Since $\tilde{U}(t)=U(t)$ is nonsingular for all $t \geq 0$ and the matrix exponential is nonsingular, the matrix in brackets is also nonsingular for all $t\geq 0$.
    Finally we obtain
    \begin{align*}
        V(t) & = X\tilde{U}(t) + \tilde{V}(t),                            \\
        X(t) & = V(t) {U(t)}^{-1} = X + \tilde{V}(t) {\tilde{U}(t)}^{-1}.
    \end{align*}
\end{proof}

The formula was presented in~\cite{Rus88} without proof.
Since the existence of the involved inverse is not trivially established, we provide a proof.

\begin{theorem}[Solution Representation \rz{II} for \DRE~\eqref{eqn:ch_adre:auto_dre},{~\cite[Thm. 1]{CalWW94}},{~\cite{Rad11}}]\label{thm:second_sol_formula}
    Let $(A,B)$ be stabilizable and $(A,C)$ be detectable and $X_{\infty}\in \Rmat{n}{n}$ be the unique symmetric positive definite stabilizing solution of the \ARE~\eqref{eqn:ch_adre:are}.
    Moreover let $\hat{A}:= A- BB^ T X_{\infty}$ and $X_L\in \Rmat{n}{n}$ be the unique symmetric positive semidefinite solution of the Lyapunov equation
    \begin{align}
        \hat{A} X_L + X_L \hat{A}^ T + BB^T = 0. \label{eqn:closed_loop_lyap}
    \end{align}
    Then the solution of the \DRE~\eqref{eqn:ch_adre:auto_dre} for $t\geq 0$ is given by
    \begin{align*}
        X(t) =  X_{\infty}  - e^{t{\hat{A}}^T}(X_{\infty}-X_0){\left( I_n - (X_L  - e^{t \hat{A}} X_L e^{t {\hat{A}}^T})(X_{\infty}-X_0)\right)}^{-1} e^{t \hat{A}}.
    \end{align*}
\end{theorem}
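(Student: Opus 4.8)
The plan is to obtain this as a direct specialization of the representation in Theorem~\ref{thm:first_sol_formula}. By Theorem~\ref{thm:ch_adre:are_existence}, under the stabilizability and detectability hypotheses the stabilizing solution $X_{\infty}$ exists, is symmetric, and makes $\hat{A} = A - BB^{T}X_{\infty}$ stable, i.e.\ $\Lambda(\hat{A})\subseteq\Cm$; moreover $X_{\infty}$ is a solution of the \ARE~\eqref{eqn:ch_adre:are}. I would therefore apply Theorem~\ref{thm:first_sol_formula} with $X := X_{\infty}$. Because $X_{\infty}^{T} = X_{\infty}$, the two a priori different exponents appearing there coincide with $\hat{A}$ and $\hat{A}^{T}$ respectively, and $\tilde{X} = X_{\infty} - X_0$, so the formula immediately becomes
\begin{align*}
    X(t) = X_{\infty} - e^{t\hat{A}^{T}}(X_{\infty} - X_0)\left(I_n - \left(\int_0^t e^{s\hat{A}}BB^{T}e^{s\hat{A}^{T}}\dx{s}\right)(X_{\infty} - X_0)\right)^{-1}e^{t\hat{A}},\qquad t\ge 0.
\end{align*}
The bracketed matrix is already known to be invertible for all $t\ge 0$ from Theorem~\ref{thm:first_sol_formula}, so no new invertibility argument is needed.

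It then remains to rewrite the integral as $X_L - e^{t\hat{A}}X_Le^{t\hat{A}^{T}}$. First note that~\eqref{eqn:closed_loop_lyap} is the \ALE~\eqref{eqn:ch_adre:ale} with $A$ replaced by the stable matrix $\hat{A}^{T}$ and $C^{T}C$ by $BB^{T}$, so Theorem~\ref{thm:ch_adre:ale_existence} ensures that the unique symmetric positive semidefinite solution $X_L$ exists (this is also what makes the statement well posed). The quickest way to finish is to differentiate $Y(t) := X_L - e^{t\hat{A}}X_Le^{t\hat{A}^{T}}$: one has $Y(0)=0$ and, using~\eqref{eqn:closed_loop_lyap},
\begin{align*}
    \dot{Y}(t) = -e^{t\hat{A}}\bigl(\hat{A} X_L + X_L\hat{A}^{T}\bigr)e^{t\hat{A}^{T}} = e^{t\hat{A}}BB^{T}e^{t\hat{A}^{T}},
\end{align*}
which is exactly the integrand, so $Y(t) = \int_0^t e^{s\hat{A}}BB^{T}e^{s\hat{A}^{T}}\dx{s}$. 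Equivalently, stability of $\hat{A}$ gives $X_L = \int_0^\infty e^{s\hat{A}}BB^{T}e^{s\hat{A}^{T}}\dx{s}$ by Theorem~\ref{thm:ch_adre:ale_existence}, and splitting the integral at $t$ and substituting $s\mapsto s+t$ in the tail yields the same identity. Substituting this into the displayed formula gives the claimed representation.

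I do not expect a real obstacle. The only points demanding care are using symmetry of $X_{\infty}$ to collapse the two exponents of Theorem~\ref{thm:first_sol_formula} to $\hat{A}$ and $\hat{A}^{T}$, and keeping track of the transposes in $\hat{A} X_L + X_L\hat{A}^{T}$ versus $e^{s\hat{A}}BB^{T}e^{s\hat{A}^{T}}$; everything else, in particular the invertibility of the inner matrix, is inherited directly from Theorem~\ref{thm:first_sol_formula}.
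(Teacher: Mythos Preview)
Your argument is correct and complete: specializing Theorem~\ref{thm:first_sol_formula} to the symmetric stabilizing solution $X=X_{\infty}$ and then recognizing the finite Gramian $\int_0^t e^{s\hat A}BB^Te^{s\hat A^T}\dx{s}$ as $X_L - e^{t\hat A}X_Le^{t\hat A^T}$ via the Lyapunov equation~\eqref{eqn:closed_loop_lyap} yields exactly the claimed formula, and invertibility is indeed inherited.

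The paper takes a different route. Rather than going through Theorem~\ref{thm:first_sol_formula}, it performs the full \emph{Riccati--Lyapunov transformation}: after the first similarity $T=\begin{bmatrix}I&0\\X_{\infty}&I\end{bmatrix}$ brings $H$ to block upper-triangular form $\tilde H$, a second similarity $\tilde T=\begin{bmatrix}I&-X_L\\0&I\end{bmatrix}$ block-diagonalizes $\tilde H$ into $\hat H=\diag(\hat A,-\hat A^T)$, so that $e^{-tH}=(T\tilde T)\,\diag(e^{-t\hat A},e^{t\hat A^T})\,(T\tilde T)^{-1}$ can be expanded entrywise. Your approach is shorter and more elegant because it reuses the already-proved representation and avoids the second transformation and the explicit $4$-block expansion. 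The paper's approach, on the other hand, produces closed expressions for $U(t)$ and $V(t)$ themselves (not just their quotient), making visible that each of them contains the growing factor $e^{-t\hat A}$; this is precisely what is exploited in Section~\ref{sec:radons_lemma:davison_maki_methods} to explain the instability of the Davison--Maki method.
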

\begin{proof}
    Similar to the proof of Theorem~\ref{thm:first_sol_formula} we use similarity transformations to decompose the Hamiltonian matrix $H$.
    This is also known as a Riccati-Lyapunov transformation~\cite[Ch. 3.1.1.]{AboFIJ03}.
    We obtain
    \begin{align*}
        T                                           :       &= \begin{bmatrix}   I_n         & 0         \\ X_{\infty}   & I_n           \end{bmatrix},\
        T^{-1} H T                         \phantom{:}      = \begin{bmatrix}   \hat{A}     & -BB^T     \\ 0            & -\hat{A}^T    \end{bmatrix}   =: \tilde{H}, \\
        \tilde{T}                                   : &     = \begin{bmatrix}   I_n         & -X_L      \\ 0            & I_n           \end{bmatrix},\
        \tilde{T}^{-1} \tilde{H} \tilde{T} \phantom{:}      = \begin{bmatrix}   \hat{A}     & 0         \\ 0            & -\hat{A}^T    \end{bmatrix}   =: \hat{H}.
    \end{align*}
    We thus get
    \begin{align}
        \begin{bmatrix} U(t) \\ V(t) \end{bmatrix}
         & =
         e^{-t H }
         \begin{bmatrix}  I_n \\ X_0 \end{bmatrix}
         =
         e^{-{t (T \tilde{T})}
             \hat{H}
         {(T \tilde{T})}^{-1}}
         \begin{bmatrix} I_n \\ X_0 \end{bmatrix}
         =
         (T \tilde{T}) e^{-t\hat{H}}{(T \tilde{T})}^{-1} \begin{bmatrix} I_n \\ X_0 \end{bmatrix}     \notag \\
         & =
        \begin{bmatrix} I_n                   & -X_L  \\ X_{\infty}   &  I_n - X_{\infty} X_L     \end{bmatrix}
        \begin{bmatrix} e^{-t \hat{A}}        & 0     \\ 0            & e^{t\hat{A}^T}            \end{bmatrix}
        \begin{bmatrix} I_n - X_L X_{\infty}  &  X_L  \\ -X_{\infty}  & I_n                       \end{bmatrix}
        \begin{bmatrix} I_n \\ X_0 \end{bmatrix}                                                    \notag  \\
                                              & =
                                              \begin{bmatrix}
                                                  e^{-t \hat{A}}\left(I_n - X_L X_{\infty} \right) + X_L e^{t \hat{A}} X_{\infty}
             &
             e^{-t \hat{A}}X_L - X_L e^{t \hat{A}^T}
             \\
             X_{\infty}e^{-t \hat{A}}\left( I_n - X_L X_{\infty} \right) - \left(I_n -X_{\infty} X_L  \right) e^{t \hat{A}}X_{\infty}
             &
             X_{\infty}e^{-t \hat{A}} X_L + \left(I_n - X_{\infty} X_L \right)e^{t \hat{A}}
                                              \end{bmatrix}
                                              \begin{bmatrix} I_n \\ X_0 \end{bmatrix}                                                    \notag  \\
                                              & =
                                              \begin{bmatrix}
                                                  e^{-t\hat{A}}(I_n - X_L(X_{\infty}-X_0)) + X_L e^{t\hat{A}^T} (X_{\infty}-X_0)
                                                  \\
                                                  X_{\infty} e^{-t\hat{A}}(I_n - X_L(X_{\infty}-X_0)) - (X_{\infty}X_L + I_n)e^{t\hat{A}^T}(X_{\infty}-X_0)
                                              \end{bmatrix}. \label{thm:second_sol_formula:hamiltonian}
    \end{align}
    Now observe that
    \begin{align*}
        U(t) & = e^{-t \hat{A}}\left( I_n - (X_L - e^{t \hat{A}} X_L e^{t \hat{A}^T})(X_{\infty} - X_0)\right),                                                                            \\
        V(t) & = X_{\infty} e^{-t\hat{A}}\left( I_n - (X_L  - e^{t \hat{A}} X_L e^{t \hat{A}^T})(X_{\infty} - X_0)\right) - e^{t\hat{A}^T}(X_{\infty}-X_{0})                               \\
             & = X_{\infty} U(t) - e^{t\hat{A}^T}(X_{\infty}-X_{0}),
             \intertext{therefore}
        X(t) & = V(t){U(t)}^{-1} = X_{\infty}  - e^{t{\hat{A}}^T}(X_{\infty}-X_{0}){\left( I_n - \left(X_L - e^{t \hat{A}} X_L e^{t {\hat{A}}^T} \right)(X_{\infty} - X_0)\right)}^{-1} e^{t \hat{A}}.
    \end{align*}
\end{proof}

In~\cite[Ch. 15.4]{AndM71} one can find another solution formula, which holds under more restrictive assumptions.
A solution formula based on the Jordan canonical form is given in~\cite[Thm. 3.2.1]{AboFIJ03}.

    \subsection{Davison-Maki Methods}\label{sec:radons_lemma:davison_maki_methods}


The \emph{Davison-Maki method} for the \NDRE{}~\eqref{eqn:nonsymdre} was proposed in~\cite{DavM73}.
The method is based on first computing the matrix exponential $e^{hM}$ for a given step size $h>0$.
According to Radon's Lemma~(Thm.~\ref{thm:radon}) we have that
\begin{align*}
    \begin{bmatrix} U(h) \\ V(h) \end{bmatrix} = e^{hM} \begin{bmatrix} I_n \\ M_0 \end{bmatrix},\quad W(h) = V(h){U(h)}^{-1}.
\end{align*}
The next step is then to make use of the semigroup property of the matrix exponential
\begin{align*}
    \begin{bmatrix} U(2h) \\ V(2h) \end{bmatrix} = e^{2hM} \begin{bmatrix} I_n \\ M_0 \end{bmatrix} = {\left(e^{hM}\right)}^{2} \begin{bmatrix} I_n \\ M_0 \end{bmatrix},\quad W(2h) = V(2h) {U(2h)}^{-1}.
\end{align*}
For the further steps we obtain
\begin{align}
    \begin{bmatrix} U(kh) \\ V(kh) \end{bmatrix} =  {\left(e^{hM}\right)}^{k} \begin{bmatrix} I_n \\ M_0 \end{bmatrix},\quad W(kh) = V(kh) {U(kh)}^{-1}. \label{eqn:alg:nonsymdre:davison_maki:variant1}
\end{align}
Another variant of the \emph{Davison-Maki method} updates $U$ and $V$ instead of the matrix exponential.
The variant follows from
\begin{align}
    \begin{bmatrix} U(kh) \\ V(kh) \end{bmatrix} =
    e^{khM}\begin{bmatrix} I_n \\ M_0 \end{bmatrix} =
    e^{hM} e^{(k-1)h M } \begin{bmatrix} I_n \\ M_0 \end{bmatrix} =
    e^{hM} \begin{bmatrix} U((k-1)h) \\ V((k-1)h) \end{bmatrix}. \label{eqn:alg:nonsymdre:davison_maki:variant2}
\end{align}
Both variants of the method are given in Algorithm~\ref{alg:nonsymdre:davison_maki}.

\begin{algorithm}[H]
    \caption{Davison-Maki method for the \NDRE~\eqref{eqn:nonsymdre}~\cite{DavM73,KenL85}}\label{alg:nonsymdre:davison_maki}
    \begin{algorithmic}[1]
        \Assumption{The \NDRE~\eqref{eqn:nonsymdre} has a solution $W\colon \interor{0}{t_f}\rightarrow \Rmat{m}{n}$.}
        \Require{Real matrices $M_0$ and $M_{ij}$ as in Theorem~\ref{thm:radon}, step size $h>0$ and final time $t_f>0$.}
        \Ensure{Matrices $W_{k}$, such that $W(kh)=W_{k}$ for $k\in \Nzero$ and $kh<t_f$.}
        \State{$W_{0} = M_0$;}
        \State{$k = 1$;}
        \Statex{\% Compute matrix exponential e.g.\ by a scaling and squaring method:}
        \State{$\Theta_h=\exp\left({h\begin{bmatrix}M_{11} & M_{12} \\ M_{21} & M_{22}\end{bmatrix}}\right)$;}
        \vspacealgorithm{}
        \Statex{\texttt{Variant with matrix exponential update:}}
        \State{$\Theta = \Theta_h$;}
        \While{$kh<t_f$}\label{alg:nonsymdre:davison_maki:time_stepping1_start}
        \State{Partition
            $
            \begin{blockarray}{ccc}
                 & n & m \\
                \begin{block}{c[cc]}
                    n & \Theta_{11} & \Theta_{12} \\
                    m & \Theta_{12} & \Theta_{22} \\
                \end{block}
            \end{blockarray}
            = \Theta;
            $
        }
        \State{$U_{\texttt{dm}} = \Theta_{11} + \Theta_{12} M_{0}$;}
        \State{$V_{\texttt{dm}} = \Theta_{21} + \Theta_{22} M_{0}$;}
        \State{$W_k=V_{\texttt{dm}}{U_{\texttt{dm}}}^{-1}$;}
        \State{$\Theta = \Theta \Theta_h$;}
        \State{$k=k+1$;}
        \EndWhile{}\label{alg:nonsymdre:davison_maki:time_stepping1_stop}
        \vspacealgorithm{}
        \Statex{\texttt{Variant with updating $U$ and $V$:}}
        \State{$U_{\texttt{dm}}=I_n$;}
        \State{$V_{\texttt{dm}}=M_0$;}
        \State{Partition
            $
            \begin{blockarray}{ccc}
                 & n & m \\
                \begin{block}{c[cc]}
                    n & \Theta_{11} & \Theta_{12} \\
                    m & \Theta_{12} & \Theta_{22} \\
                \end{block}
            \end{blockarray}
            = \Theta;
            $
        }
        \While{$kh<t_f$}\label{alg:nonsymdre:davison_maki:time_stepping2_start}
        \State{$U_{\texttt{dm}} = \Theta_{11}U_{\texttt{dm}} + \Theta_{12} V_{\texttt{dm}}$;}
        \State{$V_{\texttt{dm}} = \Theta_{21}U_{\texttt{dm}} + \Theta_{22} V_{\texttt{dm}}$;}
        \State{$W_k= V_{\texttt{dm}} U_{\texttt{dm}}^{-1}$;}
        \State{$k=k+1$;}
        \EndWhile{}\label{alg:nonsymdre:davison_maki:time_stepping2_stop}
    \end{algorithmic}
\end{algorithm}

When the \emph{Davison-Maki method} (Alg.~\ref{alg:nonsymdre:davison_maki}) is applied to the \DRE{}~\eqref{eqn:ch_adre:auto_dre}, usually numerical instabilities occur
which are due to the fact that each block
of $e^{-tH}$ as well as $U(t)$ and $V(t)$ contains the matrix $e^{-t\hat{A}}$, cp.\ equation~\eqref{thm:second_sol_formula:hamiltonian}.
Since $\hat{A}=A-BB^T X_{\infty}$ is stable, the matrix exponential of $-t\hat{A}$  exhibits exponential growth which becomes problematic for large $t$.
The occurrence of these numerical problems with the \emph{Davison-Maki method} (Alg.~\ref{alg:nonsymdre:davison_maki}) was also pointed out in~\cite{117906,Vau69,Lau82,KenL85}.
Another reason is that the spectrum of a real Hamiltonian matrix comes in quadruples, that is $\spektrum{H} = \{\lambda_1,\ldots,\lambda_n,-\lambda_1,\ldots,-\lambda_n\}$ with $\Repart(\lambda_i)\leq 0$.
Therefore, usually, the spectrum of the Hamiltonian contains eigenvalues with positive real part and, thus, also it's matrix exponential grows~\cite[Prop. 2.3.1]{MeyO17}.

A suitable modification of the \emph{Davison-Maki method} (Alg.~\ref{alg:nonsymdre:davison_maki}) was proposed in~\cite{KenL85}, but the modified method originates back to~\cite[p.~9]{KalE66}.
By Radon's Lemma~(Thm.~\ref{thm:radon}), as laid out in Section~\ref{sec:radons_lemma:flow_on_the_grassmanian_manifolds}, we have the identity
\begin{align*}
    W(kh)
    &=
    \psi\left(\range\left(\begin{bmatrix} U(kh)\\ V(kh) \end{bmatrix}\right)\right) =
    \psi\left( e^{kh M} \range\left(\begin{bmatrix} I_n\\ M_0 \end{bmatrix}\right) \right) =
    \psi\left( e^{h M}  \range\left(\begin{bmatrix} U((k-1)h)\\ V((k-1)h) \end{bmatrix}\right) \right) \\
    &=
    \psi\left( e^{h M}  \range\left(\begin{bmatrix} I_n\\ W((k-1)h) \end{bmatrix}\right) \right)
    =
    \psi\left(\range\left(e^{h M} \begin{bmatrix} I_n\\ W((k-1)h) \end{bmatrix}\right) \right).
\end{align*}
Therefore the iteration for the \emph{modified Davison-Maki method} is given by
\begin{align}
    \begin{bmatrix} \tilde{U} \\ \tilde{V} \end{bmatrix} := e^{h M}\begin{bmatrix} I_n \\ W((k-1)h) \end{bmatrix},\quad W(kh)= \tilde{V}  \tilde{U}^{-1}. \label{eqn:alg:nonsymdre:mod_davison_maki}
\end{align}
The \emph{modified Davison-Maki method} is given in Algorithm~\ref{alg:nonsymdre:mod_davison_maki}.
\begin{algorithm}[H]
    \caption{Modified Davison-Maki method for the~\NDRE~\eqref{eqn:nonsymdre}~\cite{KalE66,KenL85}}\label{alg:nonsymdre:mod_davison_maki}
    \begin{algorithmic}[1]
        \Assumption{The \NDRE~\eqref{eqn:nonsymdre} has a solution $W\colon \interor{0}{t_f}\rightarrow \Rmat{m}{n}$.}
        \Require{Real matrices $M_0$ and $M_{ij}$ as in Theorem~\ref{thm:radon}, step size $h>0$, final time $t_f>0$ and a moderate large number $tol_{\texttt{exp}}>0$.}
        \Ensure{Matrices $W_{k}$, such that $W(kh)=W_{k}$ for $k\in \Nzero$ and $kh<t_f$.}
        \State{$W_{0} = M_0$;}
        \State{$k=1$;}
        \Statex{\% Compute matrix exponential e.g.\ by a scaling and squaring method:}
        \State{$\Theta=\exp\left({h\begin{bmatrix}M_{11} & M_{12} \\ M_{21} & M_{22}\end{bmatrix}}\right)$;}\label{alg:nonsymdre:mod_davison_maki:expm}
        \vspacealgorithm{}
        \Statex{\% Check the norm  of the matrix exponential:}
        \If{$\norm{\Theta}_1 > tol_{\texttt{exp}}$}\label{alg:nonsymdre:mod_davison_maki:norm_matrix_exponential}
        \State{return Error(\glqq1-Norm of the matrix exponential is too large, decrease the step size $h$\grqq)}.
        \EndIf{}
        \State{Partition
            $
            \begin{blockarray}{ccc}
                 & n & m \\
                \begin{block}{c[cc]}
                    n & \Theta_{11} & \Theta_{12} \\
                    m & \Theta_{12} & \Theta_{22} \\
                \end{block}
            \end{blockarray}
            = \Theta;
            $
        }\label{alg:nonsymdre:mod_davison_maki:partition}
        \While{$kh<t_f$}\label{alg:nonsymdre:mod_davison_maki:time_stepping_start}
        \State{$U_{\texttt{mod\_dm}} = \Theta_{11} + \Theta_{12} W_{k-1}$;}
        \State{$V_{\texttt{mod\_dm}} = \Theta_{21} + \Theta_{22} W_{k-1}$;}
        \State{$W_{k} =  V_{\texttt{mod\_dm}}  U_{\texttt{mod\_dm}}^{-1}$;}\label{alg:nonsymdre:mod_davison_maki:transform}
        \State{$k = k+1$;}
        \EndWhile{}\label{alg:nonsymdre:mod_davison_maki:time_stepping_stop}
    \end{algorithmic}
\end{algorithm}
A decrease of the step size $h>0$, does not improve the accuracy in general, because the iteration is exact.
The accuracy is determined by the accuracy of the matrix exponential computation and the matrix inversion.
The step size cannot be chosen arbitrary large as the matrix exponential may become too large in norm.
In practice we suggest to compute the norm of the matrix exponential before the iteration starts. If the norm is too large, then the step size has to be decreased.
In the $k$-th iteration of Algorithm~\ref{alg:nonsymdre:mod_davison_maki} we have
\begin{align*}
    \begin{bmatrix} U_{\texttt{mod\_dm}} \\ V_{\texttt{mod\_dm}} \end{bmatrix}
    &=
    e^{hM}
    \begin{bmatrix} I_n \\ W((k-1)h) \end{bmatrix}
    =
    \begin{bmatrix}
        \Theta_{11} & \Theta_{12} \\
        \Theta_{21} & \Theta_{22}
    \end{bmatrix}
    \begin{bmatrix} I_n \\ W((k-1)h) \end{bmatrix}, \\
    \intertext{and the norm of the iterates can be bounded by}
    \norm{U_{\texttt{mod\_dm}}} & \leq \norm{\Theta_{11}} + \norm{\Theta_{12}} \norm{ W((k-1)h)},    \\
    \norm{V_{\texttt{mod\_dm}}} & \leq \norm{\Theta_{21}} + \norm{\Theta_{22}} \norm{ W((k-1)h)}.
\end{align*}
For small step sizes of $h>0$ it holds  $e^{hM} \approx I_{n+m} + hM$ and $\Theta_{11} \approx I_n + h M_{11}$, $\Theta_{12} \approx hM_{12}$, $\Theta_{21} \approx hM_{21}$ and $\Theta_{22} \approx I_m + hM_{22}$.
Therefore for small enough step size and moderate norm of the solution $\norm{W(t)}$, the norm of the iterates cannot grow heavily in contrast to Algorithm~\ref{alg:nonsymdre:davison_maki}.
If the norm of the iterates becomes too large during iteration, the step size should be decreased. Assume that the matrix exponential in line~\ref{alg:nonsymdre:mod_davison_maki:expm} of Algorithm~\ref{alg:nonsymdre:mod_davison_maki}
was approximated by using the scaling and squaring method, then the intermediates of the squaring phase can be used and the matrix exponential needs not be recomputed from scratch.

\begin{example}[Exponential Growth Davison-Maki method]\label{ex:exponential_growth_davison_maki}
We applied the Davison-Maki method \textup{(Alg.~\ref{alg:nonsymdre:davison_maki})} with step size $h=2^{-8}$ to a \DRE{} with the same matrices $A,B,C$ and $X_0$ as for \textup{Example~\ref{ex:eigenvalue_decay}}.
We plot the $2$-norm of the iterates $U_{\textup{\texttt{dm}}}$ and $V_{\textup{\texttt{dm}}}$ as well as the $2$-norm condition number of $U_{\textup{\texttt{dm}}}$ on the interval $[0,1]$.
The plot shows that all quantities grow exponentially over time.
Therefore, eventually, either a floating point overflow will occur or the matrix inversion ceases to be executed accurately.
\textup{Figure~\ref{fig:ch_solution_formula:mod_davison_maki_fail}} shows the same quantities for the iterates $U_{\textup{\texttt{mod\_dm}}}$ and $V_{\textup{\texttt{mod\_dm}}}$
of the modified Davison-Maki method \textup{(Alg.~\ref{alg:nonsymdre:mod_davison_maki})}.
\begin{figure}[H]
    \begin{subfigure}[b]{0.30\textwidth}
        \centering
        \resizebox{\linewidth}{!}{
            \includegraphics{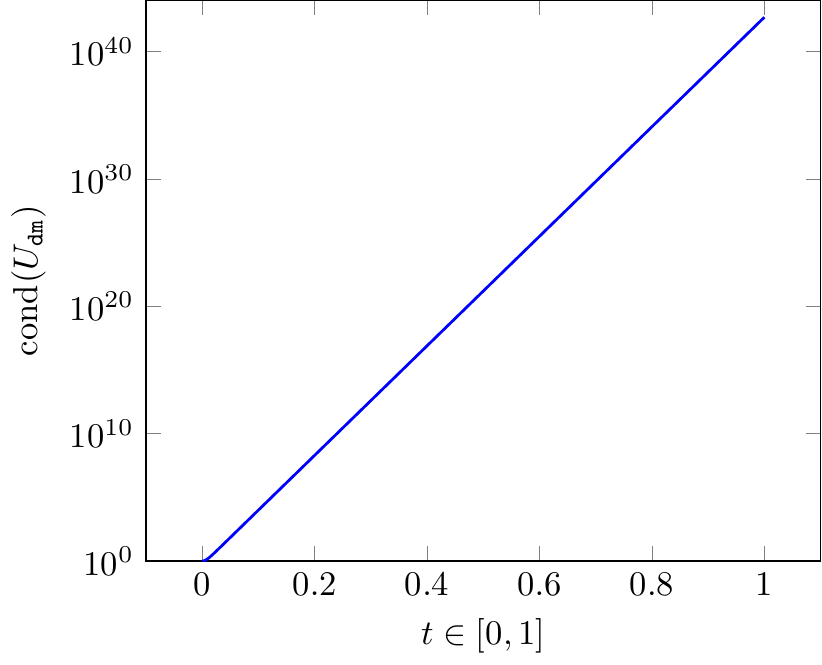}
        }
        \caption{Condition number of $U_{\texttt{dm}}$.}\label{fig:ch_solution_formula:davison_maki_fail:condU}
    \end{subfigure}
    \begin{subfigure}[b]{0.30\textwidth}
        \centering
        \resizebox{\linewidth}{!}{
            \includegraphics{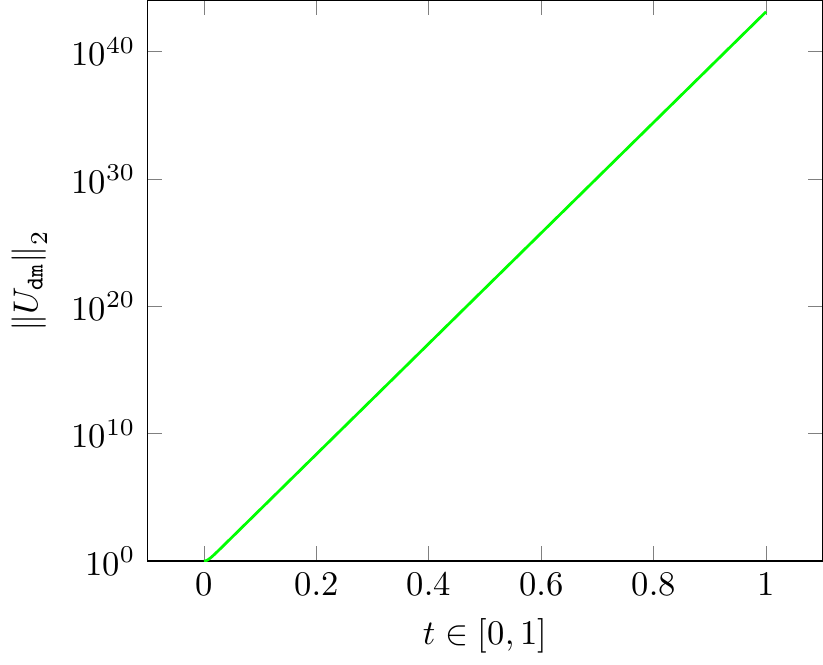}
        }
        \caption{$2$-norm $U_{\texttt{dm}}$.}\label{fig:ch_solution_formula:davison_maki_fail:normU}
    \end{subfigure}
    \begin{subfigure}[b]{0.30\textwidth}
        \centering
        \resizebox{\linewidth}{!}{
            \includegraphics{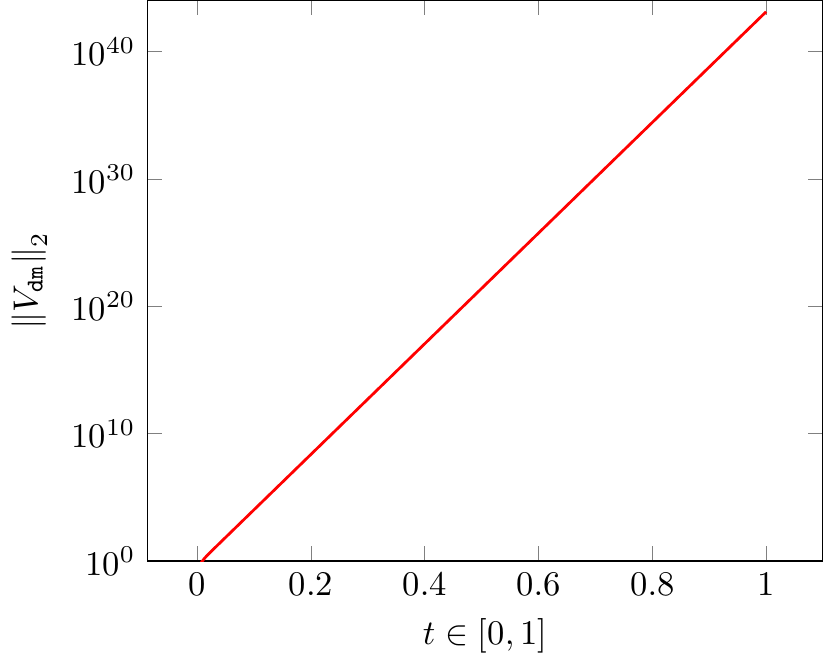}
        }
        \caption{$2$-norm $V_{\texttt{dm}}$.}\label{fig:ch_solution_formula:davison_maki_fail:normV}
    \end{subfigure}
    \caption{Davison-Maki method Algorithm~\ref{alg:nonsymdre:davison_maki} and the growth of $U_{\texttt{dm}}$ and $V_{\texttt{dm}}$.}\label{fig:ch_solution_formula:davison_maki_fail}
\end{figure}

\begin{figure}[H]
    \begin{subfigure}[b]{0.30\textwidth}
        \centering
        \resizebox{\linewidth}{!}{
            \includegraphics{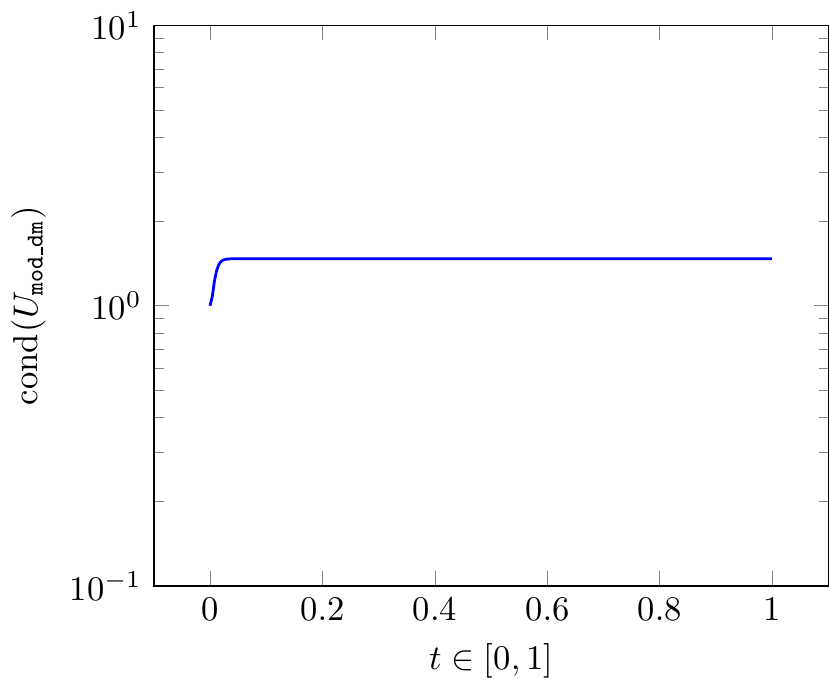}
        }
        \caption{Condition number of $U_{\texttt{mod\_dm}}$.}\label{fig:ch_solution_formula:mod_davison_maki_fail:condU}
    \end{subfigure}
    \begin{subfigure}[b]{0.30\textwidth}
        \centering
        \resizebox{\linewidth}{!}{
            \includegraphics{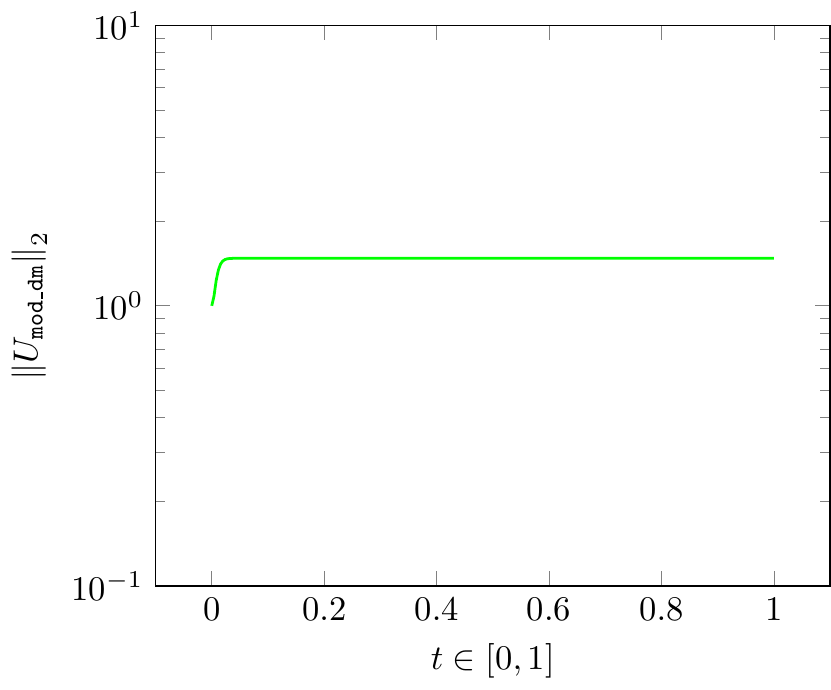}
        }
        \caption{$2$-norm $U_{\texttt{mod\_dm}}$.}\label{fig:ch_solution_formula:mod_davison_maki_fail:normU}
    \end{subfigure}
    \begin{subfigure}[b]{0.30\textwidth}
        \centering
        \resizebox{\linewidth}{!}{
            \includegraphics{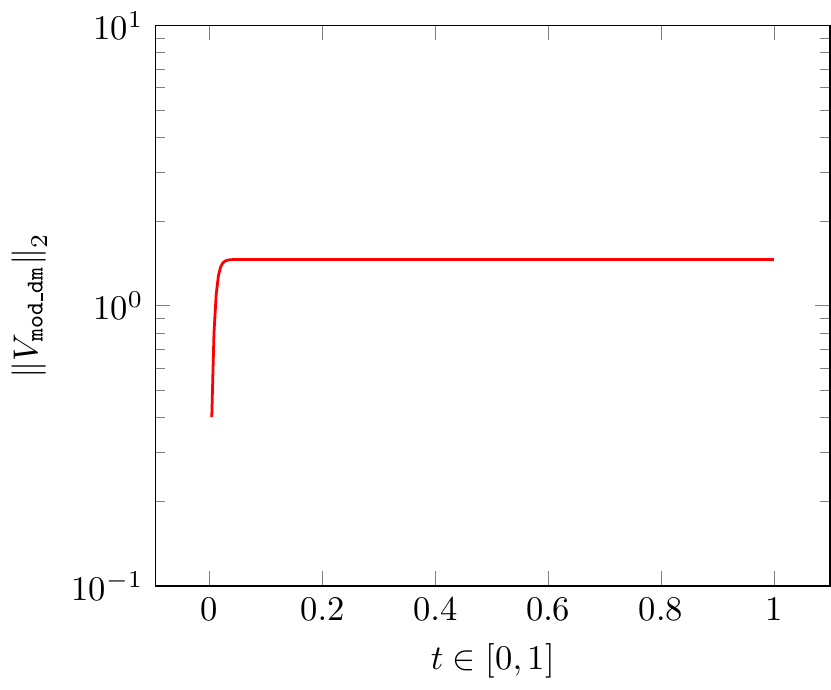}
        }
        \caption{$2$-norm $V_{\texttt{mod\_dm}}$.}\label{fig:ch_solution_formula:mod_davison_maki_fail:normV}
    \end{subfigure}
    \caption{Modified Davison-Maki method Algorithm~\ref{alg:nonsymdre:mod_davison_maki} and the growth of $U_{\texttt{mod\_dm}}$ and $V_{\texttt{mod\_dm}}$.}\label{fig:ch_solution_formula:mod_davison_maki_fail}
\end{figure}
\end{example}

If a symmetric solution is expected, then line~\ref{alg:nonsymdre:mod_davison_maki:transform} in Algorithm~\ref{alg:nonsymdre:mod_davison_maki} should be altered with $W_k = \frac{1}{2}\left( W_k + W_k^{T} \right)$,
because due to numerical errors the symmetry will be lost after some iterations.

Any computational efficient norm can also be used for the matrix exponential in Algorithm~\ref{alg:nonsymdre:mod_davison_maki} line~\ref{alg:nonsymdre:mod_davison_maki:norm_matrix_exponential}.
The \emph{modified Davison-Maki method} is also more efficient than the \emph{Davison-Maki method} in both variants,
because less matrix-matrix products are needed by time step,
compare Algorithm~\ref{alg:nonsymdre:mod_davison_maki} line~\ref{alg:nonsymdre:mod_davison_maki:time_stepping_start}-\ref{alg:nonsymdre:mod_davison_maki:time_stepping_stop}
with Algorithm~\ref{alg:nonsymdre:davison_maki} line~\ref{alg:nonsymdre:davison_maki:time_stepping1_start}-\ref{alg:nonsymdre:davison_maki:time_stepping1_stop} and
line~\ref{alg:nonsymdre:davison_maki:time_stepping2_start}-\ref{alg:nonsymdre:davison_maki:time_stepping2_stop}.

The computational cost apart from matrix exponential computation grows linearly with the time step size $h$, compare Algorithm~\ref{alg:nonsymdre:mod_davison_maki} line~\ref{alg:nonsymdre:mod_davison_maki:time_stepping_start}-\ref{alg:nonsymdre:mod_davison_maki:time_stepping_stop}.

The intermediates $U_{\texttt{dm}},  V_{\texttt{dm}}$ from Algorithm~\ref{alg:nonsymdre:mod_davison_maki} and $U_{\texttt{mod\_dm}},  V_{\texttt{mod\_dm}}$ from Algorithm~\ref{alg:nonsymdre:davison_maki}
are usually different. The next lemma shows the connection.

\begin{lemma}
    In the $k$-th iteration of \textup{Algorithm~\ref{alg:nonsymdre:davison_maki}} and \textup{Algorithm~\ref{alg:nonsymdre:mod_davison_maki}}, the iterates
    $U_{\textup{\texttt{dm}}},  V_{\textup{\texttt{dm}}}$
    and
    $U_{\textup{\texttt{mod\_dm}}},  V_{\textup{\texttt{mod\_dm}}}$
    fulfill
\begin{align*}
    \range\left( \begin{bmatrix}  U_{\textup{\texttt{dm}}} \\  V_{\textup{\texttt{dm}}} \end{bmatrix} \right)
    =
    \range\left( \begin{bmatrix}  U_{\textup{\texttt{mod\_dm}}} \\  V_{\textup{\texttt{mod\_dm}}} \end{bmatrix} \right).
\end{align*}
\end{lemma}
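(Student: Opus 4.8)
The plan is to trace the stacked iterates $\begin{bmatrix} U_{\texttt{dm}} \\ V_{\texttt{dm}} \end{bmatrix}$ and $\begin{bmatrix} U_{\texttt{mod\_dm}} \\ V_{\texttt{mod\_dm}} \end{bmatrix}$ through the two algorithms and to identify both of their column spaces with $e^{khM}\range\left( \begin{bmatrix} I_n \\ M_0 \end{bmatrix} \right)$, i.e.\ the value at $t = kh$ of the Grassmannian flow of Section~\ref{sec:radons_lemma:flow_on_the_grassmanian_manifolds}.

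First I would treat the unmodified Davison-Maki method (Algorithm~\ref{alg:nonsymdre:davison_maki}). In either of its two variants the $k$-th iterate satisfies $\begin{bmatrix} U_{\texttt{dm}} \\ V_{\texttt{dm}} \end{bmatrix} = \left( e^{hM} \right)^{k} \begin{bmatrix} I_n \\ M_0 \end{bmatrix} = e^{khM}\begin{bmatrix} I_n \\ M_0 \end{bmatrix}$: for the matrix-exponential-update variant this is immediate from $\Theta = \Theta_h^{k}$ together with the block partition used in the loop, and for the $U$/$V$-update variant it follows by induction on $k$ from the recursion $\begin{bmatrix} U_{\texttt{dm}} \\ V_{\texttt{dm}} \end{bmatrix} \leftarrow \Theta_h \begin{bmatrix} U_{\texttt{dm}} \\ V_{\texttt{dm}} \end{bmatrix}$ (the two assignments in the loop body are to be read as one simultaneous block multiplication by $\Theta_h$) and the semigroup property $\Theta_h^{k} = e^{khM}$. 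Since $e^{khM}$ is invertible, this yields $\range\left( \begin{bmatrix} U_{\texttt{dm}} \\ V_{\texttt{dm}} \end{bmatrix} \right) = e^{khM}\range\left( \begin{bmatrix} I_n \\ M_0 \end{bmatrix} \right)$.

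Next I would do the same for the modified method. By construction the $k$-th iterate of Algorithm~\ref{alg:nonsymdre:mod_davison_maki} is $\begin{bmatrix} U_{\texttt{mod\_dm}} \\ V_{\texttt{mod\_dm}} \end{bmatrix} = e^{hM}\begin{bmatrix} I_n \\ W_{k-1} \end{bmatrix}$ with $W_{k-1} = W((k-1)h)$, the iteration being exact. Under the standing assumption that the \NDRE{} has a solution on $\interor{0}{t_f}$, Radon's Lemma~(Thm.~\ref{thm:radon}) guarantees that $U((k-1)h)$ is nonsingular and $W((k-1)h) = V((k-1)h)\,U((k-1)h)^{-1}$, whence $\begin{bmatrix} U((k-1)h) \\ V((k-1)h) \end{bmatrix} = \begin{bmatrix} I_n \\ W((k-1)h) \end{bmatrix} U((k-1)h)$ and therefore $\range\left( \begin{bmatrix} I_n \\ W((k-1)h) \end{bmatrix} \right) = \range\left( \begin{bmatrix} U((k-1)h) \\ V((k-1)h) \end{bmatrix} \right) = e^{(k-1)hM}\range\left( \begin{bmatrix} I_n \\ M_0 \end{bmatrix} \right)$; this is precisely the identity for $\psi^{-1}(W((k-1)h))$ recorded in Section~\ref{sec:radons_lemma:flow_on_the_grassmanian_manifolds} (for $k=1$ it is trivial, since $W_0 = M_0$). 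Applying the invertible linear map $e^{hM}$ and the semigroup property then gives $\range\left( \begin{bmatrix} U_{\texttt{mod\_dm}} \\ V_{\texttt{mod\_dm}} \end{bmatrix} \right) = e^{hM}e^{(k-1)hM}\range\left( \begin{bmatrix} I_n \\ M_0 \end{bmatrix} \right) = e^{khM}\range\left( \begin{bmatrix} I_n \\ M_0 \end{bmatrix} \right)$, which coincides with the range obtained above for $\begin{bmatrix} U_{\texttt{dm}} \\ V_{\texttt{dm}} \end{bmatrix}$, and this proves the claim.

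I do not expect a genuine obstacle here: the argument is essentially bookkeeping on top of Radon's Lemma and the Grassmannian-flow description. The only points that need care are (i) reading the $U$/$V$-update loop of Algorithm~\ref{alg:nonsymdre:davison_maki} as a single block multiplication so that the closed form $\Theta_h^{k}\begin{bmatrix} I_n \\ M_0 \end{bmatrix}$ is correct, and (ii) invoking the nonsingularity of $U((k-1)h)$, which is exactly what legitimizes replacing the graph basis $\begin{bmatrix} I_n \\ W \end{bmatrix}$ by an arbitrary basis $\begin{bmatrix} U \\ V \end{bmatrix}$ of the same $n$-dimensional subspace of $\Rfield^{n+m}$. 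A fully self-contained alternative would be to induct on $k$ to prove $\range\left( \begin{bmatrix} U_{\texttt{mod\_dm}} \\ V_{\texttt{mod\_dm}} \end{bmatrix} \right) = \range\left( \begin{bmatrix} U(kh) \\ V(kh) \end{bmatrix} \right)$ directly, using $W_k = V_{\texttt{mod\_dm}}\,U_{\texttt{mod\_dm}}^{-1}$ at each step; this is the same reasoning repackaged without explicitly referring to $\psi$ and $\varphi$.
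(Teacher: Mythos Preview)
Your proposal is correct and follows essentially the same route as the paper: both arguments identify the column spaces of $\begin{bmatrix} U_{\texttt{dm}} \\ V_{\texttt{dm}} \end{bmatrix}$ and $\begin{bmatrix} U_{\texttt{mod\_dm}} \\ V_{\texttt{mod\_dm}} \end{bmatrix}$ with $\range\!\left( e^{khM}\begin{bmatrix} I_n \\ M_0 \end{bmatrix} \right)$, using the semigroup property for the former and the relation $W((k-1)h)=V((k-1)h)\,U((k-1)h)^{-1}$ from Radon's Lemma for the latter. Your write-up is slightly more detailed (treating both variants of Algorithm~\ref{alg:nonsymdre:davison_maki} and making the nonsingularity of $U((k-1)h)$ explicit), but the substance is the same.
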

\begin{proof}
From equation~\eqref{eqn:alg:nonsymdre:davison_maki:variant1}  it follows
\begin{align*}
    \range\left( \begin{bmatrix}  U_{\textup{\texttt{dm}}} \\  V_{\textup{\texttt{dm}}} \end{bmatrix} \right) =
    \range\left( e^{hkM} \begin{bmatrix}  I_n \\  M_0 \end{bmatrix} \right).
\end{align*}
Equation~\eqref{eqn:alg:nonsymdre:mod_davison_maki} gives
\begin{align*}
    \range\left( \begin{bmatrix}  U_{\textup{\texttt{mod\_dm}}} \\  V_{\textup{\texttt{mod\_dm}}} \end{bmatrix} \right)
    &=
    \range\left( e^{hM} \begin{bmatrix}  I_n        \\  W((k-1)h) \end{bmatrix} \right)
    =
    \range\left( e^{hM} \begin{bmatrix}  I_n        \\ V((k-1)h) {U((k-1)h)}^{-1} \end{bmatrix} \right) \\
    &=
    \range\left( e^{hM} \begin{bmatrix}  U((k-1)h)  \\ V((k-1)h)  \end{bmatrix} \right)
    =
    \range\left( e^{khM} \begin{bmatrix}  I_n  \\ M_0  \end{bmatrix} \right).
\end{align*}

\end{proof}

\section{Galerkin Approach for Large-Scale Differential Riccati Equations}\label{sec:galerkin_approach_for_large_scale_differential_riccati_equations}

In this section we develop a feasible numerical approach for large-scale differential Riccati equations.
We consider the \DRE~\eqref{eqn:ch_adre:auto_dre} and assume that $X_0=0$.
We develop the Galerkin approach based on two theoretical considerations.
First we use the solution formula of Theorem~\ref{thm:second_sol_formula}.
We show that the range of the solution $X_{\infty}$ of the \ARE{} is invariant under the action of the closed-loop matrix $A-B{B^T}X_{\infty}$.
It follows then that the action of the matrix exponential of the closed-loop matrix on $X_{\infty}$ has the same property.
This makes the approach consistent in the sense that the evolution does not leave the ansatz space and provides reasoning that the consistency error made by a numerical approximation to these subspaces can be made arbitrarily small.
Moreover, this invariance property allows for a straight-forward low-dimensional approximation of the matrix exponential.
After that we show that, for our proposed choice of a Galerkin basis, a quick decay of the eigenvalues of the solution of the \ARE{} implies a decent approximation of the solution $X(t)$ of the \DRE{}.

The result is a low-dimensional solution space with an accessible formula for the relevant matrix exponential so that we can use the \emph{modified Davison-Maki} (Algorithm~\ref{alg:nonsymdre:mod_davison_maki}) for an efficient solution of the projected Galerkin system.

\subsection{Invariant Subspaces for the Galerkin Approach}\label{sec:galerkin_approach_for_large_scale_differential_riccati_equations:galerkin_approach_using_solution_representations}

First we prove that the range space of the solution $X_{\infty}$ of the~\ARE{} is invariant under the action of the transposed closed-loop matrix ${\left(A-BB^T X_{\infty}\right)}^T$.
\begin{lemma}\label{thm:ch_adre:are_invariance_range}
Let $(A,B)$ be stabilizable, $(A,C)$ be detectable and $X_{\infty} \in \Rmat{n}{n}$ be the unique stabilizing solution of the \ARE{}~\eqref{eqn:ch_adre:are}.
Then $\range(X_{\infty})$ is ${(A-B B^T X_{\infty})}^T$--invariant.
\end{lemma}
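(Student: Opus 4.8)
The plan is to deduce the invariance from the Krylov-subspace characterisation of $\range(X_{\infty})$ that is already available to us. By Theorem~\ref{thm:ch_adre:are_range} we have $\range(X_{\infty}) = \mathcal{K}(A^T, C^T)$, and, as recalled in Section~\ref{sec:preliminiaries}, the linear space $\mathcal{K}(A^T, C^T)$ is $A^T$--invariant; hence $A^T$ maps $\range(X_{\infty})$ into itself.

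Next I would use that $X_{\infty}$ is symmetric (Theorem~\ref{thm:ch_adre:are_existence}) to write the transposed closed-loop matrix as ${(A - BB^T X_{\infty})}^T = A^T - X_{\infty} BB^T$. For an arbitrary $v \in \range(X_{\infty})$ the first summand contributes $A^T v \in \range(X_{\infty})$ by the previous step, while $X_{\infty}(BB^T v) \in \range(X_{\infty})$ simply because it lies in the image of $X_{\infty}$. Subtracting, ${(A - BB^T X_{\infty})}^T v = A^T v - X_{\infty}(BB^T v) \in \range(X_{\infty})$, which is exactly the claim.

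A self-contained variant, which needs only the inclusion $\mathcal{K}(A^T, C^T) \subseteq \range(X_{\infty})$ valid for every symmetric solution of the \ARE{} (cf.\ the discussion after Theorem~\ref{thm:ch_adre:are_range}), is to rearrange $0 = A^T X_{\infty} + X_{\infty} A - X_{\infty} BB^T X_{\infty} + C^T C$ and transpose it to obtain ${(A - BB^T X_{\infty})}^T X_{\infty} = -X_{\infty} A - C^T C$; evaluating at $v = X_{\infty} w$ then yields ${(A - BB^T X_{\infty})}^T v = -X_{\infty}(Aw) - C^T(Cw)$, and both terms lie in $\range(X_{\infty})$ — the second because $\range(C^T) \subseteq \mathcal{K}(A^T, C^T) \subseteq \range(X_{\infty})$. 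Either way the argument is short; the only points requiring a little care are to split the closed-loop matrix correctly using the symmetry of $X_{\infty}$, and to remember that it is the transposed dynamics $A^T$ (not $A$) whose invariant subspace governs the statement, so I do not anticipate any genuine obstacle here.
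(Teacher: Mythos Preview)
Your main argument is correct and is essentially the paper's own proof: both use Theorem~\ref{thm:ch_adre:are_range} to identify $\range(X_{\infty})=\mathcal{K}(A^T,C^T)$, invoke the $A^T$--invariance of that Krylov space, and then split ${(A-BB^TX_{\infty})}^T=A^T-X_{\infty}BB^T$ via the symmetry of $X_{\infty}$ to handle the two summands separately; the only cosmetic difference is that the paper fixes an orthonormal basis $Q_{\infty}$ and writes the conclusion as ${(A-BB^TX_{\infty})}^TQ_{\infty}=Q_{\infty}(\cdots)$, which it reuses later. Your alternative via the rearranged \ARE{} identity ${(A-BB^TX_{\infty})}^TX_{\infty}=-X_{\infty}A-C^TC$ is a nice bonus that does not appear in the paper and, as you note, needs only the inclusion $\range(C^T)\subseteq\range(X_{\infty})$ rather than the full equality of Theorem~\ref{thm:ch_adre:are_range}.
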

\begin{proof}
    We can assume that $X_{\infty}\neq 0$.
    Let the columns of $Q_{\infty} \in \Rmat{n}{p}$ be an orthonormal basis for $\range(X_{\infty})$.
    Then $Q_{\infty}Q_{\infty}^T$ is the orthogonal projection onto $\range(X_{\infty})$.
    We obtain
    \begin{align*}
        Q_{\infty}Q_{\infty}^T X_{\infty} = X_{\infty}.
    \end{align*}
    By Theorem~\ref{thm:ch_adre:are_range}, the columns of $Q_{\infty}$ are also an orthonormal basis for $\mathcal{K}\left(A^T, C^T\right)$.
    The space $\mathcal{K}\left(A^T, C^T\right)$ is $A^T$--invariant. We obtain
    \begin{align*}
        A^T Q_{\infty} = Q_{\infty}Q_{\infty}^T A^T Q_{\infty}.
    \end{align*}
    Finally, we have
    \begin{align*}
        {(A - BB^T X_{\infty})}^T Q_{\infty}
        &=
        Q_{\infty} Q_{\infty}^T A^T Q_{\infty} - Q_{\infty} Q_{\infty}^T X_{\infty} B B ^T Q_{\infty} \\
        &=
        Q_{\infty} \left( Q_{\infty}^T A^T Q_{\infty}  - Q_{\infty}^T X_{\infty} B  B^T Q_{\infty}\right).
    \end{align*}
    This means $\range(X_{\infty})$ is ${(A-B B^T X_{\infty})}^T$--invariant.
\end{proof}

According to Theorem~\ref{thm:second_sol_formula} the solution of the \DRE~\eqref{eqn:ch_adre:auto_dre} is for $t\geq 0$ given by
\begin{align*}
    X(t) &=\phantom{:} X_{\infty}  - e^{t \hat{A}^T} X_{\infty} {\left( I_n - \left( X_L - e^{t\hat{A}}X_L e^{t\hat{A}^T} \right)X_{\infty} \right)}^{-1} e^{t\hat{A}},
\end{align*}
where $\hat{A} = A-BB^T X_{\infty}$.
The identity ${\left( I_n - P(t) \right)}^{-1} = I_n + {\left( I_n - P(t) \right)}^{-1}P(t)$ leads to
\begin{align}
    X(t) &=
    X_{\infty}
         -
    e^{t \hat{A}^T} X_{\infty} e^{t\hat{A}}\notag
            \\
         &\phantom{=}-
    e^{t\hat{A}^T}X_{\infty} {\left( I_n - \left( X_L - e^{t\hat{A}}X_L e^{t\hat{A}^T} \right)X_{\infty}  \right)}^{-1}
    \left( X_L - e^{t\hat{A}}X_L e^{t\hat{A}^T} \right)X_{\infty}   e^{t\hat{A}}.\label{eqn:galerkin:expanded_sol_formula}
\end{align}

\textbf{Derivation by using the exact solution $X_{\infty}$ of the \ARE{}}\\
By Lemma~\ref{thm:ch_adre:are_invariance_range} it holds that $\range\left( X_{\infty} \right)$ is invariant under $\hat{A}^T$.
Assume now that $X_{\infty}$ is given in factorized form, this means that $X_{\infty} = Z_{\infty} Z _{\infty}^T$ and $Z_{\infty}\in \Rmat{n}{p}$ and $1\leq p=\rank\left( X_{\infty} \right)\leq n$.
If $\rank(X_{\infty})=0$, then also $X_{\infty}=0$ as well as the solution $X(t)$.
Now it holds that $ \range(X_{\infty}) = \range(Z_{\infty})$ and consequently $\range(Z_{\infty})$ is invariant under $\hat{A}^T$.

By means of the compact singular value decomposition of $Z_{\infty}$, we obtain matrices $Q_{\infty}\in \Rmat{n}{p}$, $S_{\infty}\in \Rmat{p}{p}$ and $V_{\infty}\in \Rmat{p}{p}$, such that
$ Z_{\infty} = Q_{\infty} S_{\infty} V_{\infty}^T$, $\range\left( Q_{\infty} \right) = \range\left( Z_{\infty}\right)$ and
\begin{align*}
    Z_{\infty} Z_{\infty}^T = Q_{\infty} S_{\infty}^2 Q_{\infty}^T.
\end{align*}
Because of the invariance we get
\begin{align*}
    e^{t\hat{A}^T} Q_{\infty}  = Q_{\infty} e^{t Q_{\infty}^{T} \hat{A}^T Q_{\infty}}.
\end{align*}
Now observe that
\begin{align}
    e^{t\hat{A}^T} X_{\infty}
    =
    e^{t\hat{A}^T}  Z_{\infty} Z_{\infty}^T
    =
    e^{t\hat{A}^T}  Q_{\infty} S_{\infty}^2 Q_{\infty}^T
    =
    Q_{\infty} e^{t Q_{\infty}^{T} \hat{A}^T Q_{\infty}} S_{\infty}^2 Q_{\infty}^T.\label{eqn:galerkin:action_matrix_exponential}
\end{align}
Therefore the solution $X(t)$ can be written in the form
\begin{align}
    X(t) = X_{\infty}  - Q_{\infty}\tilde{X}(t) Q_{\infty}^T. \label{eqn:ch_num_appr:xtilde_dre}
\end{align}
We use the \DRE~\eqref{eqn:ch_adre:auto_dre} and equation~\eqref{eqn:ch_num_appr:xtilde_dre}
and get a differential equation for $\tilde{X}(t)$
\begin{subequations}\label{eqn:ch_num_appr:dre_xtilde_proj}
\begin{align}
    \dot{\tilde{X}}(t)  &= Q_{\infty}^T {\hat{A}}^T Q_{\infty} \tilde{X}(t) + \tilde{X} (t)  Q_{\infty}^T  \hat{A} Q_{\infty}  + \tilde{X} Q_{\infty}^T BB^T Q_{\infty} \tilde{X}(t), \label{eqn:ch_num_appr:dre_xtilde_proj_eqn} \\
    \tilde{X} (0)       &= Q_{\infty}^T X_{\infty}Q_{\infty}. \label{eqn:ch_num_appr:dre_xtilde_proj_initial}
\end{align}
\end{subequations}

\textbf{Derivation by using a low-rank approximation $X_N$ of the exact solution $X_{\infty}$ of the \ARE{}}\\
Let now $Z_N Z_N^T = X_{N} \approx  X_{\infty}$ be a low-rank approximation obtained by a numerical method.
We replace $X_{\infty}$ by $X_N$ in formula~\ref{eqn:galerkin:expanded_sol_formula} and obtain
\begin{align*}
X(t) &\approx           X_N - e^{t{\left( A-BB^T X_N \right)}^T} X_N  e^{t{\left( A-BB^T X_N \right)}}                                                  \\
     &\phantom{\approx}     - e^{t{\left( A-BB^T X_N \right)}^T} X_N {\left( I_n - \left( X_L - e^{t\hat{A}}X_L e^{t\hat{A}^T} \right)X_{\infty}  \right)}^{-1}
                    \left( X_L - e^{t\hat{A}}X_L e^{t\hat{A}^T} \right) X_N e^{t{\left( A-BB^T X_N \right)}}.
\end{align*}

Let $Z_N = Q_N S_N V_N^T$ be the compact singular value decomposition of the low-rank factor.
According to formula~\ref{eqn:galerkin:action_matrix_exponential},
we propose to approximate the action of the matrix exponential by
\begin{align*}
    e^{t{\left( A-BB^T X_N \right)}^T} X_N    &= e^{t{\left( A-BB^T X_N \right)}^T} Z_N Z_N^T = e^{t{\left( A-BB^T X_N \right)}^T} Q_N S_N^2 Q_N^T \\
                                              &\approx Q_N e^{t Q_N^T {\left( A-BB^T X_N \right)}^T Q_N } S_N^2 Q_N^T.
\end{align*}
Therefore we obtain the Galerkin ansatz $X(t)  \approx X_N  - Q_N \tilde{X}_N(t) Q_N^T$ for the numerical approximation.
Again we use the \DRE~\eqref{eqn:ch_adre:auto_dre}
and get a differential equation for $\tilde{X}_N(t)$
\begin{align*}
    \dot{\tilde{X}}_N(t)    &= Q_N^T {\left( A - BB^T X_N \right)}^T Q_N \tilde{X}_N(t) +  \tilde{X}_N(t) Q_N^T {\left( A - BB^T X_N \right)} Q_N       \\
                            &\phantom{=} + \tilde{X}_N(t)Q_N^T B B^T Q_N \tilde{X}_N(t) + Q_N^T\mathcal{R}(X_N)Q_N.                                     \\
        \tilde{X}_N(0)      &= Q_N^T X_N Q_N.
\end{align*}
We assume that the numerical low-rank approximation is accurate enough such that $\mathcal{R}(X_N)\approx 0$.
Then it holds:
\begin{align*}
    \twonorm{Q_N^T \mathcal{R}(X_N) Q_N} \leq \twonorm{\mathcal{R}(X_N)}\approx 0.
\end{align*}
This means that the projected residual $Q_N ^T  \mathcal{R}(X_N) Q_N$ is even smaller than the residual of the \ARE{} $\mathcal{R}(X_N)$
and, therefore, we can neglect the residual.

    \subsection{Reduced Trial Space for the Galerkin Approach using Eigenvalue Decay}\label{sec:galerkin_approach_for_large_scale_differential_riccati_equations:galerkin_approach_using_eigenvalue_decay}

Let $X_{\infty} = Z_{\infty} Z_{\infty}^T$ be the exact solution of the~\ARE{}~\eqref{eqn:ch_adre:are}.
Moreover let $Z_{\infty} = Q_{\infty} S_{\infty} V_{\infty}^T$ be its compact singular value decomposition,
such that $Q_{\infty} \in \Rmat{n}{p},~S_{\infty} \in \Rmat{p}{p}$ and $V_{\infty} \in \Rmat{p}{p}$ and
$Z_{\infty} = Q_{\infty} S_{\infty} V_{\infty}^T$.
The compact singular value decomposition of $Z_{\infty}$ gives a spectral decomposition of
$X_{\infty}$ that is
\begin{align*}
    X_{\infty} = Z_{\infty} Z_{\infty}^T = Q_{\infty} S_{\infty}^2 Q_{\infty}^T,~\text{and}~
    S_{\infty}^2 = \diag \left( \LambdaK{1}\left( X_{\infty}\right),\ldots,  \LambdaK{p}\left( X_{\infty}\right)\right).
\end{align*}
This means that the diagonal matrix $S_{\infty}^2$ contains all non-zero eigenvalues of $X_{\infty}$ in a non-increasing fashion.
We have that $\range\left( X_{\infty} \right) = \range\left( Z_{\infty} \right) = \range\left( Q_{\infty} \right)$.
Because of Theorem~\ref{thm:ch_adre:are_range} it holds that $\range\left( Q_{\infty} \right)= \mathcal{K} \left( A^T, C^T \right)$.
According to Theorem~\ref{thm:ch_adre:dre_invariant_subspace} we can represent the solution in the following form
\begin{align*}
    X(t) = Q_{\infty} Q_{\infty}^T X(t) Q_{\infty} Q_{\infty}^T.
\end{align*}
This representation has the advantage that the entries of $Q_{\infty}^T X(t) Q_{\infty}$ can be bounded by the eigenvalues of $X_{\infty}$.

\begin{theorem}\label{thm:galerkin:entry_decay}
    Let $(A,B)$ be stabilizable and $(A,C)$ be detectable. Moreover let $X_{\infty} \in \Rmat{n}{n}$ be the unique symmetric positive semidefinite solution of the
    ARE~\eqref{eqn:ch_adre:are} and $q_1,\ldots,q_n \in \Rfield^{n}$ be a system of orthonormal eigenvectors of $X_{\infty}$ corresponding to the eigenvalues
    $\LambdaK{1} \left( X_{\infty} \right),\ldots  ,\LambdaK{n} \left( X_{\infty} \right)\in \Rfield$.
    Then for all $i,j = 1,\ldots,n $ and $t \geq 0$ the following holds:
    \begin{align}
        \abs{q_i^T X(t) q_j} \leq \sqrt{\LambdaK{i} \left( X_{\infty} \right) \LambdaK{j} \left(X_{\infty}\right)},\label{eqn:galerkin:entry_decay}
    \end{align}
    where $X$ is the unique solution of the \DRE{}~\eqref{eqn:ch_adre:auto_dre} with $X_0 = 0$.
\end{theorem}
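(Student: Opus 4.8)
The whole estimate rests on the two-sided Loewner bound $0 \preccurlyeq X(t) \preccurlyeq X_{\infty}$ for all $t \geq 0$, so the plan is to establish this first and then read off~\eqref{eqn:galerkin:entry_decay} by Cauchy--Schwarz. Since $X_0 = 0$ we have $\mathcal{R}(X_0) = C^{T}C \succcurlyeq 0$, so Theorem~\ref{thm:ch_adre:dre_existence} applies: the solution $X$ is symmetric positive semidefinite and $t \mapsto X(t)$ is monotonically non-decreasing, in particular $X(t) \succcurlyeq X(0) = 0$. For the upper bound I would invoke the monotone convergence of the \DRE{} solution to the stabilizing \ARE{} solution recalled in Section~\ref{sec:algebraic_and_differential_riccati_equations}: starting from $X_0 = 0 \preccurlyeq X_{\infty}$ the solution increases towards $X_{\infty}$, hence $X(t) \preccurlyeq X_{\infty}$ for every $t \geq 0$.

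With the sandwich available, the rest is short. Because $X(t) \succcurlyeq 0$, the map $(u,v) \mapsto u^{T} X(t) v$ is a symmetric positive semidefinite bilinear form, so the Cauchy--Schwarz inequality gives, for all $i,j$ and all $t \geq 0$,
\begin{align*}
    \absvalue{q_i^{T} X(t) q_j} \leq \sqrt{q_i^{T} X(t) q_i}\,\sqrt{q_j^{T} X(t) q_j}.
\end{align*}
The upper Loewner bound yields $q_i^{T} X(t) q_i \leq q_i^{T} X_{\infty} q_i$, and since $q_i$ is a unit eigenvector of $X_{\infty}$ for the eigenvalue $\LambdaK{i}(X_{\infty})$ we have $q_i^{T} X_{\infty} q_i = \LambdaK{i}(X_{\infty})$. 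Combining,
\begin{align*}
    \absvalue{q_i^{T} X(t) q_j} \leq \sqrt{\LambdaK{i}(X_{\infty})}\,\sqrt{\LambdaK{j}(X_{\infty})} = \sqrt{\LambdaK{i}(X_{\infty})\LambdaK{j}(X_{\infty})},
\end{align*}
which is~\eqref{eqn:galerkin:entry_decay}.

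The one step that deserves care is the upper bound $X(t) \preccurlyeq X_{\infty}$, and a self-contained argument avoiding the convergence theorem runs as follows. Using $\mathcal{R}(X_{\infty}) = 0$ together with the identity $\mathcal{R}(X) = \hat{A}^{T}(X - X_{\infty}) + (X - X_{\infty})\hat{A} - (X - X_{\infty})BB^{T}(X - X_{\infty})$, where $\hat{A} = A - BB^{T}X_{\infty}$, the error $E(t) := X_{\infty} - X(t)$ satisfies $\dot{E}(t) = \hat{A}^{T} E(t) + E(t)\hat{A} + E(t)BB^{T}E(t)$ with $E(0) = X_{\infty} \succcurlyeq 0$. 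Comparing with the linear equation $\dot{E}_{0} = \hat{A}^{T}E_{0} + E_{0}\hat{A}$, $E_{0}(0) = X_{\infty}$, whose solution is $E_{0}(t) = e^{t\hat{A}^{T}} X_{\infty} e^{t\hat{A}} \succcurlyeq 0$, the difference $\Delta := E - E_{0}$ solves the linear inhomogeneous equation $\dot{\Delta} = \hat{A}^{T}\Delta + \Delta\hat{A} + E BB^{T} E$ with $\Delta(0) = 0$, so variation of constants gives $\Delta(t) = \int_{0}^{t} e^{(t-s)\hat{A}^{T}} E(s)BB^{T}E(s)\, e^{(t-s)\hat{A}}\,\dx{s} \succcurlyeq 0$, since the integrand is positive semidefinite for each $s$. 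Hence $E(t) = E_{0}(t) + \Delta(t) \succcurlyeq 0$, that is $X(t) \preccurlyeq X_{\infty}$, which closes the proof. I expect this comparison step, rather than the two lines of Cauchy--Schwarz, to be the only place needing attention.
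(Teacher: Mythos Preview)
Your proof is correct and, in the core estimate, slightly cleaner than the paper's. Both arguments start from the sandwich $0 \preccurlyeq X(t) \preccurlyeq X_{\infty}$; from there the paper proceeds by an ad hoc $2\times 2$ argument: for $i\neq j$ it tests the Loewner inequalities against $\alpha q_i + \beta q_j$, combines $X(t)\preccurlyeq X_\infty$ (giving the upper bound $\alpha^2\LambdaK{i}+\beta^2\LambdaK{j}$) with $X(t)\succcurlyeq 0$ (dropping the diagonal terms on the lower side), and concludes that the $2\times 2$ matrix $\bigl[\begin{smallmatrix}\LambdaK{i} & -q_i^TX(t)q_j\\ -q_i^TX(t)q_j & \LambdaK{j}\end{smallmatrix}\bigr]$ is positive semidefinite, whence the determinant bound. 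You instead invoke Cauchy--Schwarz for the semidefinite form $u^TX(t)v$ and then bound the diagonal entries via $X(t)\preccurlyeq X_\infty$; this is the same inequality, reached more directly by naming it. The one place where your write-up adds real content is the self-contained comparison argument for $X(t)\preccurlyeq X_\infty$: the paper simply asserts the sandwich (with a misdirected reference to the \ALE{} existence theorem), whereas your derivation via $E=X_\infty-X$, the shifted Riccati identity with $\hat A$, and the variation-of-constants representation of $\Delta=E-E_0$ is correct and makes the proof independent of the unstated convergence result.
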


\begin{proof}
    According to Theorem~\ref{thm:ch_adre:ale_existence} the inequality $0\preccurlyeq X(t) \preccurlyeq X_{\infty}$
    holds for all $t\geq 0$.
    By multiplying the inequality with $q_i^T$ from the left and $q_i$ from the right we obtain
    \begin{align*}
        0
        \leq
        q_{i}^T X(t) q_i \leq q_{i}^T X_{\infty} q_{i}
        =
        \LambdaK{i}\left( X_{\infty} \right)  q_{i}^T q_{i}
        =
        \LambdaK{i}\left(X_{\infty} \right).
    \end{align*}
    Now let $i\neq j$ and $\alpha, \beta \in \Rfield$.
    Again by multiplying the inequality with $\alpha q_i + \beta q_j$ we obtain
    \begin{align*}
        0
        \leq
        {\left( \alpha q_i + \beta q_j \right)}^T X(t) \left( \alpha q_i + \beta q_j \right)
        \leq
        {\left( \alpha q_i + \beta q_j \right)}^T X_{\infty} \left( \alpha q_i + \beta q_j \right).
    \end{align*}
    Since $X(t)$ and $X_{\infty}$ are symmetric, it applies that
    \begin{align*}
        \alpha^2 q_i^T X(t) q_i + 2 \alpha \beta q_i^T X(t) q_j + \beta^2 q_j^T X(t) q_j
        \leq
        \alpha^2 q_i^T X_{\infty} q_i + 2\alpha \beta q_i^T X_{\infty}q_j + \beta^2 q_j^T X_{\infty}q_j.
    \end{align*}
    As $q_i$ and $q_j$ are different orthonormal eigenvectors of $X_{\infty}$, we obtain for the right hand side
    \begin{align*}
        \alpha^2 q_i^T X_{\infty} q_i + 2\alpha \beta q_i^T X_{\infty}q_j + \beta^2 q_j^T X_{\infty}q_j
        &=
        \alpha^2 \LambdaK{i}\left(X_{\infty}\right) + 2\alpha \beta \LambdaK{j}\left(X_{\infty}\right) q_i^T q_j + \beta^2 \LambdaK{i}\left(X_{\infty}\right) \\
        &=
        \alpha^2 \LambdaK{i}\left(X_{\infty}\right) + \beta^2 \LambdaK{i}\left(X_{\infty}\right).
    \end{align*}
    As $X(t)$ is symmetric positive semidefinite, the following inequality holds for the left hand side.
    \begin{align*}
        \alpha^2 q_i^T X(t) q_i +2 \alpha \beta q_i^T X(t) q_j + \beta^2 q_j^T X(t) q_j \geq 2\alpha \beta q_i^T X(t) q_j.
    \end{align*}
    Now we have
    \begin{align*}
        0
        \leq
        \alpha^2 \LambdaK{i}\left( X_{\infty} \right) - 2 \alpha \beta q_i^T X(t) q_j + \beta^2 \LambdaK{j} \left( X_{\infty} \right)
        =
        \begin{bmatrix} \alpha & \beta \end{bmatrix}
        \begin{bmatrix} \LambdaK{i}\left( X_{\infty} \right)  & - {q_i}^T X(t)q_j \\ -{q_i}^T X(t) q_j & \LambdaK{j}\left( X_{\infty} \right) \end{bmatrix}
        \begin{bmatrix} \alpha \\ \beta \end{bmatrix}.
    \end{align*}
    Since this holds for all $\alpha,\beta \in \Rfield$ the matrix
    \begin{align*}
        \begin{bmatrix} \LambdaK{i}\left( X_{\infty} \right)  & - {q_i}^T X(t)q_j \\ -{q_i}^T X(t) q_j & \LambdaK{j}\left( X_{\infty} \right) \end{bmatrix}
    \end{align*}
    is symmetric positive semidefinite. Therefore its determinant must be non-negative,
    \begin{align*}
        0
        \leq
        \LambdaK{i}\left( X_{\infty} \right) \LambdaK{j}\left( X_{\infty} \right) - {\left( {q_i}^T X(t) q_j \right)}^2.
    \end{align*}
    Finally this leads to
    \begin{align*}
        \abs{{q_i}^T X(t) q_j}
        \leq
        \sqrt{ \LambdaK{i}\left( X_{\infty} \right) \LambdaK{j}\left( X_{\infty} \right)}.
    \end{align*}
\end{proof}

Let the columns of $Q_{\infty}$ be $q_1,\ldots,q_p$.
Due to the decay of the eigenvalues $\LambdaK{k}\left( X_\infty \right)$ of the solution of the \ARE~\eqref{eqn:ch_adre:are} and the inequality~\eqref{eqn:galerkin:entry_decay} from Theorem~\ref{thm:galerkin:entry_decay},
the values $\abs{q_i^T X(t) q_j}$ also decay for $i+j$ increasing.
We have that
\begin{align*}
    X(t)
    =
    Q_{\infty} Q_{\infty}^T X(t) Q_{\infty} Q_{\infty}^T
    =
    \sum\limits_{i,j=1}^{p} \left({q_i^T X(t)q_j}\right) q_{i} q_j^T.
\end{align*}

For quick enough eigenvalue decay, we expect that $\abs{q_i^T X(t) q_j}\leq \sqrt{\LambdaK{i} \left( X_{\infty} \right) \LambdaK{j} \left(X_{\infty}\right)}\approx 0$
for $i+j$ large enough.
We truncate the series and obtain
\begin{align*}
    X(t)
    \approx
    \sum\limits_{i,j=1}^{k} \left({q_i^T X(t)q_j}\right) q_{i} q_j^T
    =
    Q_{\infty,k} Q_{\infty,k}^T X(t) Q_{\infty,k} Q_{\infty,k}^T,
\end{align*}
where $Q_{\infty,k}=\begin{bmatrix}q_1,\ldots,q_k \end{bmatrix}\in \Rmat{n}{k}$.
We also consider the appropriate real linear space
\begin{align*}
    \mathcal Q_{\infty,k}:= \left\{ Q_{\infty,k} Y Q_{\infty,k}^T \mid Y\in \Rmat{q}{q} \right\} \subseteq \Rmat{n}{n}
\end{align*}
together with the orthogonal projection
\begin{align*}
    \mathcal P_k: \Rmat{n}{n}\to \mathcal Q_{\infty,k}, \ \mathcal P_{\infty,k}(X) = Q_{\infty,k} Q_{\infty,k}^T X Q_{\infty,k} Q_{\infty,k}^T
\end{align*}
As the columns of $Q_{\infty,k}$ are orthonormal, it holds that $\mathcal P_{\infty,k}^2(X) = \mathcal P_{\infty,k}(X)$.
Moreover the projection $\mathcal P_{\infty,k}$ is orthogonal, because
\begin{align*}
    \frodot{X-\mathcal P_{\infty,k}(X)}{Q_{\infty,k}YQ_{\infty,k}}
    &=
    \frodot{X- Q_{\infty,k}Q_{\infty,k}^T X Q_{\infty,k}Q_{\infty,k}^T}{Q_{\infty,k}YQ_{\infty,k}^T}
    \\
    &=
    \frodot{X}{Q_{\infty,k}YQ_{\infty,k}^T} - \frodot{Q_{\infty,k}Q_{\infty,k}^T X Q_{\infty,k}Q_{\infty,k}^T}{Q_{\infty,k}YQ_{\infty,k}^T}
    \\
    &=
    \frodot{X}{Q_{\infty,k}YQ_{\infty,k}^T} - \frodot{X}{Q_{\infty,k}YQ_{\infty,k}^T} = 0
\end{align*}
for all $Y\in \Rmat{k}{k}$. Therefore the best approximation of $X(t)$ in $\mathcal Q_{\infty,k}$ is given by
\begin{align*}
    \sum\limits_{i,j=1}^{k} \left({q_i^T X(t)q_j}\right) q_{i} q_j^T
    = \mathcal P_{\infty,k}(X(t))
    = \myargmin\limits_{X \in \mathcal Q_{\infty,k}} \fronorm{X-X(t)}
\end{align*}
and for the approximation error we obtain
\begin{align*}
    \fronorm{X(t) - \mathcal P_{\infty,k}(X(t))}
    &=
    \fronorm{\sum\limits_{\substack{i,j=1\\  i>k \lor j>k }}^{p} \left({q_i^T X(t)q_j}\right) q_{i} q_j^T}
    =
    \sqrt{\sum\limits_{\substack{i,j=1\\ i>k \lor j>k}}^{p} \abs{ {q_i^T X(t)q_j} }^2}
    \\
    &\leq
    \sqrt{\sum\limits_{\substack{i,j=1\\ i>k \lor j>k}}^{p} \LambdaK{i}\left(X_{\infty}\right) \LambdaK{j}\left( X_{\infty} \right) }.
\end{align*}
Since the eigenvalues $\LambdaK{p+1}\left( X_{\infty} \right),\ldots,\LambdaK{n}\left( X_{\infty} \right)$ are $0$ we obtain
\begin{align}\label{eqn:galerkin:projection-error}
    \fronorm{X(t) - \mathcal P_{\infty,k}(X(t))}
    \leq
    \sqrt{\sum\limits_{\substack{i,j=1\\ i>k \lor j>k }}^{n} \LambdaK{i}\left(X_{\infty}\right) \LambdaK{j}\left( X_{\infty} \right) }.
\end{align}

We propose therefore to setup a trial space for the Galerkin approach using a system of eigenvectors corresponding to the largest eigenvalues.
This can be obtained by using a low-rank method to obtain a numerical approximation of the solution of the \ARE{}.
Then a compact singular value decomposition of the numerical low-rank approximation of $X_{\infty}$ can be used to obtain an approximation of the eigenvectors corresponding to the largest eigenvalues.
The small singular values can be safely truncated from the singular value decomposition by virtue of Thm.~\ref{thm:galerkin:entry_decay}.
This reduces also the dimension of the trial space.
Let
\begin{align*}
    Z_N = Q_N S_N  V_N^T.
\end{align*}
be the truncated reduced singular value decomposition of the low-rank approximation.
With that, the trial space for the Galerkin approach is given by
\begin{align*}
    \left\{ Q_N  \tilde X Q_N^T \mid  \tilde X\in \Rmat{p}{p} \right\},
\end{align*}
and, as $X(t)$ converges to $X_{\infty}$ and $X_{\infty} \approx Z_N Z_N^T$, we propose the Galerkin ansatz
\begin{align*}
    X(t) \approx Z_N Z_N^T  - Q_{N}\tilde{X}(t) Q_N^T.
\end{align*}

\begin{example}[Decay of Absolute Values of  Entries]
We illustrate the decay of $\left|q_i X(t) q_j^T\right|$ in \textup{Figures~\ref{fig:ch_adre:entries_decay_dre:t1}-\ref{fig:ch_adre:entries_decay_dre:t9}}.
We have chosen the same matrices as for the \textup{Example~\ref{ex:eigenvalue_decay}}.
To improve the visualization all values below machine precision were set to machine precision.
The eigenvalue decay of the solution $X_{\infty}$ of the corresponding \ARE{} is shown in \textup{Figure~\ref{fig:ch_adre:entries_decay_dre:xinf}}.

\newcommand{\EntriesDecayPlotScale}{0.49125\textwidth}

\newcommand{\EntriesDecayScale}{1.0\textwidth}

\newcommand{\EntriesDecaySkip}{\\[0.5em]}

\newcommand{\EntriesDecayPlot}[3]{
    \begin{minipage}[h]{\EntriesDecayPlotScale}
        \captionsetup{type=figure}
        \centering
        \includegraphics[width=\EntriesDecayScale]{tex_figures/entry_decay_dre_t#1.pdf}
        \captionof{figure}{#2}\label{fig:ch_adre:entries_decay_dre:#3}
    \end{minipage}
}

\vspace{1em}
\EntriesDecayPlot{1}{Decay of $\abs{q_i^T X(t) q_j}$ for $t=1$.}{t1}
\hfill
\EntriesDecayPlot{3}{Decay of $\abs{q_i^T X(t) q_j}$ for $t=3$.}{t3}
\EntriesDecaySkip{}
\EntriesDecayPlot{5}{Decay of $\abs{q_i^T X(t) q_j}$ for $t=5$.}{t5}
\hfill
\EntriesDecayPlot{7}{Decay of $\abs{q_i^T X(t) q_j}$ for $t=7$.}{t7}
\EntriesDecaySkip{}
\EntriesDecayPlot{9}{Decay of $\abs{q_i^T X(t) q_j}$ for $t=9$.}{t9}
\begin{minipage}[h]{\EntriesDecayPlotScale}
    \captionsetup{type=figure}
    \centering
    \includegraphics[width=0.775\textwidth]{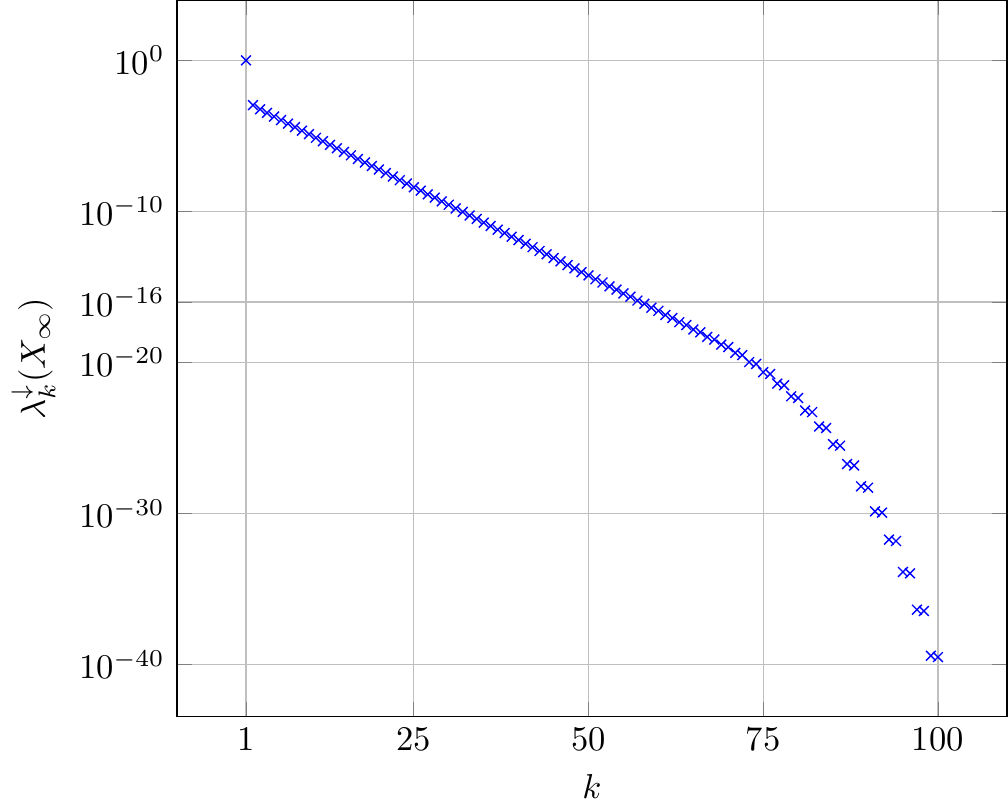}
    \captionof{figure}{The eigenvalue decay of $X_{\infty}$.}\label{fig:ch_adre:entries_decay_dre:xinf}
\end{minipage}
\end{example}

\begin{remark}
    With minor adjustments, all arguments also hold for the generalized \DRE{}
\begin{subequations}\label{eqn:galerkin:gen_auto_dre}
    \begin{align}
        M^T\dot{X}(t)M & = A^T X(t) M + {M}^T X(t)A  - M^T X(t)BB^T X(t) M + C^{T}C, \label{eqn:galerkin:gen_auto_dre_eqn}  \\
        X(0)           & = 0, \label{eqn:galerkin:gen_auto_dre_initial}
    \end{align}
\end{subequations}
    with $M\in \Rmat{n}{n}$ nonsingular
    that can accommodate, e.g., a mass matrix from a finite element discretization.
\end{remark}

In summary, the proposed approach reads as written down in Algorithm~\ref{alg:galerkin}.
\begin{algorithm}
    \caption{Galerkin approach for the generalized \DRE~\eqref{eqn:galerkin:gen_auto_dre} (\AREGALERKIN{})}\label{alg:galerkin}
    \begin{algorithmic}[1]
        \Assumption{$\left(AM^{-1}, B\right)$ is stabilizable and $(AM^{-1},CM^{-1})$ is detectable.
        }
        \Require{$M,~A\in \Rmat{n}{n},~B\in \Rmat{n}{b},~C\in \Rmat{c}{n}$.}
        \Ensure{$X(t) \approx Z_{\infty}Z_{\infty}^T - Q_{\infty} \tilde{X}(t) Q_{\infty}^T$
        that approximates the solution to \\ $M^T\dot{X}(t)M=A^T X(t) M + M^{T}X(t) A - M^T X(t) BB^T X(t) M + C^T C,\ X(0)=0$.}
        \vspacealgorithm{}
        \Statex{\% Solve the \ARE{}:}
        \State{$A^T X_{\infty}M + M^{T}X_{\infty} A  -M^T X_{\infty} BB^T X_{\infty} M + C^T C = 0$ for $X_{\infty} \approx Z_{\infty} Z_{\infty}^T$ and $Z_{\infty}\in \Rmat{n}{q}$;}
        \vspacealgorithm{}
        \Statex{\% Compute compact singular value decomposition:}
        \State{$[Q_{\infty}, S_{\infty},\sim] = \texttt{svd}(Z_{\infty},0)$;}
        \vspacealgorithm{}
        \Statex{\% Set tolerance to largest singular value times machine epsilon:}
        \State{$tol = \vareps_{\text{machine}} \cdot S_{\infty}(1,1)$;}
        \vspacealgorithm{}
        \Statex{\% Truncate all singular values smaller than tolerance and get truncated low-rank factor:}
        \State{$idx = \texttt{diag}(S_{\infty})\geq tol$;}\label{alg:galerkin:truncate}
        \State{$S_{\infty} = S_{\infty}(idx,idx)$;}
        \State{$Q_{\infty} = Q_{\infty}(:,idx)$;}
        \State{$Z_{\infty} = Q_{\infty}S_{\infty}$;}\label{alg:galerkin:Qinf}
        \vspacealgorithm{}
        \Statex{\% Compute matrices:}
        \State{$A_F = Q_{\infty}^T \left( AM^{-1} - BB^T Z_{\infty}Z_{\infty}^T\right) Q_{\infty}$;}
        \State{$B_F = Q_{\infty}^T  B$;}
        \vspacealgorithm{}
        \Statex{\% Solve the differential equation using Algorithm~\ref{alg:nonsymdre:mod_davison_maki}:}
        \State{$\dot{\tilde{X}}(t) = A_F^T \tilde{X}(t) + \tilde{X}(t) A_F + \tilde{X}(t) B_F B_F^T \tilde{X}(t),\ \tilde{X}(0) = S_{\infty}^2$;}
    \end{algorithmic}
\end{algorithm}

\pagebreak
\section{Numerical Experiments}\label{sec:numerical_experiments}

To quantify the performance of Algorithm~\ref{alg:galerkin}, we consider a number of differential Riccati equations that are used to define optimal controls.
Concretely, we consider the generalized differential Riccati equation
\begin{subequations}\label{eqn:ch_num_setup:dre}
\begin{align}
    M^T\dot{X}(t)M & = A^T X(t) M + {M}^T X(t)A  - M^T X(t)BB^T M + C^{T}C, \label{eqn:ch_num_setup:dre_eqn} \\
    X(0)           & = 0. \label{eqn:ch_num_setup:dre_ini}
\end{align}
\end{subequations}
and their realizations. First, we consider the \RAIL{} benchmark example, that is a finite element discretization of a heat equation; see~\cite{morBenS05} for the model description.
The second example, \CONVDIFF{}, derives from a finite-differences discretized heat equation with convection on the unit square with homogenous Dirichlet boundary conditions,
\begin{align*}
    \frac{\partial}{\partial t} x(\xi,t) - \Delta x(\xi,t)  - v \cdot \nabla x(\xi,t)  &= f(\xi) u(t)\quad    \text{in}\ \Omega \times (0,T)
\end{align*}
where $\Omega = {(0,1)}^2$ and $v={\left[10, 100\right]}^T$; see~\cite{Pen00a}.

On both examples, we compare the proposed method with the splitting methods developed in~\cite{Sti18a,Sti15}.
The splitting methods are based on a splitting of the \DRE{} into an affine and nonlinear subproblem. The advantages of that approach lie in the fact that the nonlinear subproblem can be solved by an explicit solution formula.
The numerical solution of the linear subproblem is based on approximating the action of a matrix exponential by means of Krylov subspace methods.
We used the \matlab{} implementation \dresplit{}~\cite{dresplit} of the splitting methods for our experiments.
In the tests, we employed the \emph{Lie} and \emph{Strang} splitting of order $1$ and $2$ respectively, as well as the symmetric splitting of order $4,6$ and $8$.
We abbreviate the methods by \LIE{}, \STRANG{}, \SYMMETRICtwo{}, \SYMMETRICfour{}, \SYMMETRICsix{} and \SYMMETRICeight{}.

To evaluate the error, we computed a reference solution $X_{\texttt{ref}}(t)$ using \SYMMETRICeight{}  with constant time step size $h$.
The basic information about the setup of the benchmark problems are given in Table~\ref{tab:benchmark}.
\begin{table}[!h]
\centering
\begin{tabular}{@{}lllll@{}}
\toprule
    problem     & $n$   & matrices                                  & interval      & reference solution                \\ \midrule
                &       & $M$ symmetric positive definite,          &               &                                   \\
    \RAIL{}     & 5177  & $A$ symmetric, $M^{-1}A$ stable,          & $[0,464]$     & \SYMMETRICeight{}, $h=2^{-5}$     \\
                &       & $B\in \Rmat{n}{6},\ C\in \Rmat{7}{n}$     &               &                                   \\ \midrule
                &       & $M=I_n$,                                  &               &                                   \\
    \CONVDIFF{} & 6400  & $A$ nonsymmetric and stable,              & $[0,0.125]$   & \SYMMETRICeight{}, $h=2^{-18}$    \\
                &       & $B\in \Rmat{n}{1},\ C\in \Rmat{1}{n}$     &               &                                   \\
\bottomrule
\end{tabular}
\caption{Information about benchmark problems.}\label{tab:benchmark}
\end{table}

All computations are carried out on a machine with 2$\times $ \xeon{} Skylake Silver 4110 @ 2.10GHz CPU with 8 cores, 192 GB Ram and \matlab{} 2018a.
We have used the low-rank Newton ADI iteration implemented in \mexmess{}\cite{mymessweb}~to solve the algebraic Riccati equations; as required for our approach as laid out in Algorithm~\ref{alg:galerkin}.

We report the absolute and relative errors
\begin{align*}
    \norm{X(t) - X_{\texttt{ref}}(t)}\quad \text{and}\quad \frac{\norm{X(t) - X_{\texttt{ref}}(t)}}{\norm{X_{\texttt{ref}}(t)}},
\end{align*}
where $X(t)$ is the numerical approximation and $X_{\texttt{ref}}(t)$ is the reference solution in $2$-norm and Frobenius norm.
We also report the norm of the reference solution $\norm{X_{\texttt{ref}}(t)}$ as well as the convergence to the stationary point
$\norm{X_{\texttt{ref}}(t) - X_{\infty}}_2$.

Numerical results for the Galerkin approximation from Algorithm~\ref{alg:galerkin} and for the splitting scheme based solvers
and be found in Appendices~\ref{appendix:numerical_results_for_galerkin_approach} and~\ref{appendix:numerical_results_for_splitting_schemes}.
The computational costs for both methods are given in Section~\ref{sec:numerical_experiments:computational_time}.
Also, we evaluate the best approximation in the trial space of the reference solution, which is given by
\begin{align*}
    X_{\texttt{best}}(t) := Q_{\infty}Q_{\infty}^{T} X_{\texttt{ref}}(t) Q_{\infty}Q_{\infty}^T = \myargmin\limits_{X \in \left\{ Q_{\infty} \tilde{X} Q_{\infty}^T  \mid \tilde{X} \in \Rmat{p}{p}\right\}} \norm{X-X_{\texttt{ref}} (t)}_F,
\end{align*}
where $Q_{\infty}$ is the matrix from Algorithm~\ref{alg:galerkin} Line~\ref{alg:galerkin:Qinf}.

The code of the implementation and the precomputed reference solution are available as mentioned in Figure~\ref{fig:linkcodndat}.
\begin{figure}[h!]
    \begin{framed}
        \textbf{Code and Data Availability} \\
        The source code of the implementations used to compute the presented results is available from:
        \begin{center}
            \href{https://doi.org/10.5281/zenodo.2629737}{\texttt{doi:10.5281/zenodo.2629737}}\\
            \href{https://gitlab.mpi-magdeburg.mpg.de/behr/behbh19_dre_are_galerkin_code}{\texttt{https://gitlab.mpi-magdeburg.mpg.de/behr/behbh19\_dre\_are\_galerkin\_code}}
        \end{center}
        under the GPLv2+ license
        and is authored by Maximilian Behr.
    \end{framed}
    \caption{Link to code and data.}\label{fig:linkcodndat}
\end{figure}

    \subsection{Galerkin Approach and Splitting Schemes}\label{sec:numerical_experiments:galerkin_approach_and_splitting_schemes}


The initial step of Algorithm~\ref{alg:galerkin} requires the solution to the associated~\ARE{}.
For this task we call \mexmess{} that iteratively computes the numerical solution to the following absolute and relative residuals
\begin{align*}
    \norm{A^T Z_{\infty} Z_{\infty}^T  M + M^T  Z_{\infty} Z_{\infty}^T  A - M^T  Z_{\infty} Z_{\infty}^T  B B^T Z_{\infty} Z_{\infty}^T  M + C^T C}_2
    \intertext{and}
    \frac{\norm{A^T Z_{\infty} Z_{\infty}^T  M + M^T  Z_{\infty} Z_{\infty}^T  A - M^T  Z_{\infty} Z_{\infty}^T  B B^T Z_{\infty} Z_{\infty}^T   M + C^T C}_2}
    {\norm{{C}^T C}_2}.
\end{align*}

The achieved values for the different test setups as well as the number of columns of the corresponding $Z_{\infty}$ after truncation (see Step~\ref{alg:galerkin:truncate} of Algorithm~\ref{alg:galerkin}),
that define the dimension of the reduced model, are listed in Table~\ref{fig:ch_num_exp_galerkin:residual}.

\begin{table}[h]
    \centering
    \includegraphics{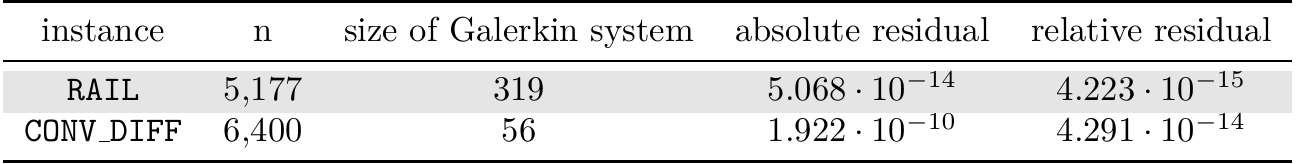}
    \caption{Residuals for the \ARE{} $0 = A^T X M + M X A - M^T X B B ^T X M + C^T C$.}\label{fig:ch_num_exp_galerkin:residual}
\end{table}

The $1$-norm bound for the matrix exponential $tol_{\texttt{exp}}$ from Algorithm~\ref{alg:nonsymdre:mod_davison_maki} was set to $1\cdot 10^{10}$.
The resulting step sizes are given in Table~\ref{fig:ch_num_exp_galerkin:step_sizes}.

\begin{table}[H]
    \centering
    \includegraphics{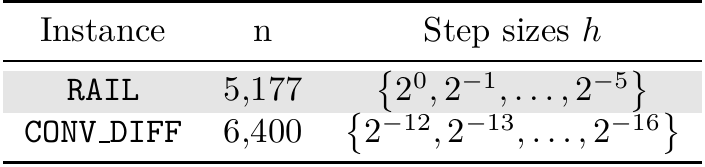}
    \caption{Step sizes $h$ for modified Davison-Maki method Algorithm~\ref{alg:nonsymdre:mod_davison_maki}.}\label{fig:ch_num_exp_galerkin:step_sizes}
\end{table}

We plot the numerical errors in Figures
\ref{fig:appendix:galerkin:galerkin_plots:RAIL:5177:k0:abs}--\ref{fig:appendix:galerkin:galerkin_plots:RAIL:5177:k-5:rel} and
\ref{fig:appendix:galerkin:galerkin_plots:CONV_DIFF3:80:k-12:abs}--\ref{fig:appendix:galerkin:galerkin_plots:CONV_DIFF3:80:k-16:rel}.
Figures
\ref{fig:appendix:norm_reference_solution:RAIL:5177}, \ref{fig:appendix:convergence_stationary:RAIL:5177},
\ref{fig:appendix:norm_reference_solution:CONV_DIFF3:80} and \ref{fig:appendix:convergence_stationary:CONV_DIFF3:80}
show the norm of the reference solution and the convergence to the stationary point.

In view of the performance, we can interpret the presented numbers and plots as follows:
Firstly, the accuracy of the \emph{modified Davison-Maki method}; cf. Figure~\ref{fig:appendix:galerkin:galerkin_plots:RAIL:5177:k0:rel} and~\ref{fig:appendix:galerkin:galerkin_plots:RAIL:5177:k-5:rel} is independent of the step size,
as discussed in Section~\ref{sec:radons_lemma:davison_maki_methods}. Still we compute the solution on different time grids, since for control applications the values of the solution might be needed at many time instances.

The computational times for \AREGALERKIN{} include the solve of the corresponding \ARE{} and the subsequent integration of the projected dense \DRE{}.
Since the efforts for the time integration exactly doubles with a bisection of the step size, from the timings for the \RAIL{} problem, with, e.g., $42$s ($h=2^{-3}$) and  $77$s ($h=2^{-4}$) (see Figure~\ref{fig:num_exp_computational_time:splitting:plots_time:RAIL_5177_GALERKIN}), one infers that most of the time is spent to solve the dense \DRE{}.
Conversely, for the \CONVDIFF{} benchmark problem, most of the time ($45$s) was used to solve the \ARE{}. As the resulting Galerkin projected \DRE{} system is of size $56$ only, the computational costs for the time integration are vanishingly small. Accordingly, the differences in the effort caused by finer time grids are hardly visible; see Figure \ref{fig:num_exp_computational_time:splitting:plots_time:CONV_DIFF3_80_GALERKIN}.

The reference solution for the \RAIL{} problem is large in norm what makes the absolute error comparatively large;
see the Figure~\ref{fig:appendix:norm_reference_solution:RAIL:5177} in Appendix~\ref{appendix:numerical_results_for_galerkin_approach}.

In both examples, in terms of accuracy, the \AREGALERKIN{} approximation is nearly at the same level as the high order splitting schemes,
cf. Figures~\ref{fig:appendix:galerkin:galerkin_plots:RAIL:5177:k0:rel},~\ref{fig:appendix:galerkin:splitting_plots:RAIL:5177:k-4:rel}
and
Figures~\ref{fig:appendix:galerkin:galerkin_plots:CONV_DIFF3:80:k-12:rel},~\ref{fig:appendix:galerkin:splitting_plots:CONV_DIFF3:80:k-16:rel}.
We note, however, that the~\AREGALERKIN{} method does not give the best possible approximation in the trial space; compare the error levels for $X_{\mathsf{best}}$.

In any case, the~\AREGALERKIN{} method clearly outperforms the splitting methods in terms of computational time versus accuracy in all test examples.

    \subsection{Computational Time}\label{sec:numerical_experiments:computational_time}


\newcommand{\TimingMinipageScale}{0.50\textwidth}

\newcommand{\TimingScale}{0.95\textwidth}

\newcommand{\TimingCaptionAREGALERKIN}{Timing for \AREGALERKIN{}.}
\newcommand{\TimingCaptionSplitting}{Timing for splitting schemes.}

\newcommand{\TimingPlots}[2]{
    \begin{minipage}{\TimingMinipageScale}
        \captionsetup{type=figure}
        \centering
        \includegraphics[width=\TimingScale]{tex_figures/plots_time_#1.pdf}
        \captionof{figure}{#2}\label{fig:num_exp_computational_time:splitting:plots_time:#1}
    \end{minipage}
}



\newcommand{\TimingSkip}{\\[0.5em]}

\newcommand{\TimingPlotsLegend}[1]{
    \begin{minipage}[c]{\textwidth}
        \centering
        \includegraphics{tex_figures/plots_time_legend_#1.pdf}
    \end{minipage}
}

\TimingPlots{RAIL_5177_GALERKIN}{\TimingCaptionAREGALERKIN{}}
\TimingPlots{RAIL_5177_SPLITTING}{\TimingCaptionSplitting{}}

\TimingPlots{CONV_DIFF3_80_GALERKIN}{\TimingCaptionAREGALERKIN{}}
\TimingPlots{CONV_DIFF3_80_SPLITTING}{\TimingCaptionSplitting{}}
\TimingSkip{}
\TimingPlotsLegend{ALL}



\pagebreak
\section{Conclusion}\label{sec:conclusion}

We have reviewed and extended fundamental properties of the solution to the differential and algebraic Riccati equation
and heavily relied on the solution representation provided by Radon's Lemma to analyze variants of \emph{Davison-Maki methods}
and to derive an efficient Galerkin projection scheme.
Numerical tests confirmed that the resulting projected scheme outperforms existing methods in terms of computation time, memory requirements, and approximation quality.
In particular, storage requirements have been the bottleneck in the numerical considerations of large-scale differential Riccati equations.

Our proposed Galerkin ansatz bases on a low-rank approximation of the associated algebraic Riccati equation (\ARE{}) for which there are efficient solvers.
Moreover, the information on the residual and on eigenvalue decay,
that come with the low-rank iteration for the ARE can be directly transferred into estimates for the approximation quality
of our approach the more that the use of the \emph{Davison-Maki methods} leads to an \emph{exact} time discretization.

Future work will deal with the treatment of nonzero initial conditions. While the formulas are easily extended to this case, the invariance properties and the eigenvalue comparisons, that were the backbone of our numerical approach, are no longer given in general.
For the (not so) special case that the initial condition $X_0$ writes as $X_0= C^{T}WC$ with a symmetric positive definite weighting matrix $W$, the flow invariance as established
in Section~\ref{sec:galerkin_approach_for_large_scale_differential_riccati_equations:galerkin_approach_using_solution_representations} still holds so that the presented algorithm can be applied without modification.
For a general low-rank initial condition $X_0 = Z_0Z_0^T$ the flow invariance can be achieved by taking the columns of the solution to the \ARE{}~\eqref{eqn:ch_adre:are} with $C^{T}C$ replaced by ${\left[C^T, Z_0\right]\left[C^T, Z_0\right]}^T$ as the Galerkin ansatz space.
In any case, the inequality $0\preccurlyeq X(t) \preccurlyeq X_\infty$, where $X_\infty$ is the solution of the \ARE{}, does not hold anymore and, thus, approximation quality cannot be assured as in~\eqref{eqn:galerkin:projection-error}.
However, if one can find a matrix $\tilde X$ for which the comparison $0\preccurlyeq X(t) \preccurlyeq \tilde X$ holds, all arguments of Section~\ref{sec:galerkin_approach_for_large_scale_differential_riccati_equations:galerkin_approach_using_eigenvalue_decay} apply accordingly.

\bibliography{mybib/mybib}
\pagebreak

\appendix
\section{Numerical Results for Galerkin Approach}\label{appendix:numerical_results_for_galerkin_approach}


\newcommand{\GalerkinMinipageScale}{0.45\textwidth}

\newcommand{\GalerkinScale}{0.875\textwidth}

\newcommand{\GalerkinPlots}[5]{
    \begin{minipage}{\GalerkinMinipageScale}
        \captionsetup{type=figure}
        \centering
        \includegraphics[width=\GalerkinScale]{tex_figures/galerkin_plots_#1_#2_k#3_#4.pdf}
        \captionof{figure}{#5}\label{fig:appendix:galerkin:galerkin_plots:#1:#2:k#3:#4}
    \end{minipage}
}

\newcommand{\GalerkinCaptionAbs}{Absolute error of the Galerkin and Best approximation.}
\newcommand{\GalerkinCaptionRel}{Relative error of the Galerkin and Best approximation.}

\newcommand{\GalerkinSkip}{\\[0.5em]}

\newcommand{\GalerkinPlotsLegend}{
    \begin{minipage}[c]{\textwidth}
        \centering
        \includegraphics{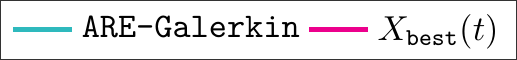}
    \end{minipage}
}

\newcommand{\ReferenceSolutionPlots}[2]{
    \begin{minipage}{\GalerkinMinipageScale}
        \captionsetup{type=figure}
        \centering
        \includegraphics[width=\GalerkinScale]{tex_figures/norm_reference_solution_#1_#2.pdf}
        \captionof{figure}{Norm of the reference solution.}\label{fig:appendix:norm_reference_solution:#1:#2}
    \end{minipage}
}

\newcommand{\ConvergenceStationaryPlot}[2]{
    \begin{minipage}{\GalerkinMinipageScale}
        \captionsetup{type=figure}
        \centering
        \includegraphics[width=\GalerkinScale]{tex_figures/convergence_stationary_#1_#2.pdf}
        \captionof{figure}{Convergence to the stationary point.}\label{fig:appendix:convergence_stationary:#1:#2}
    \end{minipage}
}

\begin{center}\texttt{RAIL}, $n=5177$ and $M^T\dot{X}(t)M = A^T X(t) M +  M^T X(t) A - M^T X(t) B B ^T X(t) M + C^T C,\ X(0)=0$. \end{center}
\GalerkinPlots{RAIL}{5177}{0}{abs}{\GalerkinCaptionAbs{}}
\hfill
\GalerkinPlots{RAIL}{5177}{0}{rel}{\GalerkinCaptionRel{}}
\GalerkinSkip{}
\GalerkinPlots{RAIL}{5177}{-5}{abs}{\GalerkinCaptionAbs{}}
\hfill
\GalerkinPlots{RAIL}{5177}{-5}{rel}{\GalerkinCaptionRel{}}
\GalerkinSkip{}
\GalerkinPlotsLegend{}
\ReferenceSolutionPlots{RAIL}{5177}
\hfill
\ConvergenceStationaryPlot{RAIL}{5177}
\pagebreak

\begin{center}\texttt{CONV\_DIFF}, $n=6400$ and $\dot{X}(t) = A^T X(t)  +   X(t) A - X(t) B B ^T X(t) + C^T C,\ X(0)=0$. \end{center}
\GalerkinPlots{CONV_DIFF3}{80}{-12}{abs}{\GalerkinCaptionAbs{}}
\hfill
\GalerkinPlots{CONV_DIFF3}{80}{-12}{rel}{\GalerkinCaptionRel{}}
\GalerkinSkip{}
\GalerkinPlots{CONV_DIFF3}{80}{-16}{abs}{\GalerkinCaptionAbs{}}
\hfill
\GalerkinPlots{CONV_DIFF3}{80}{-16}{rel}{\GalerkinCaptionRel{}}
\GalerkinSkip{}
\GalerkinPlotsLegend{}
\ReferenceSolutionPlots{CONV_DIFF3}{80}
\hfill
\ConvergenceStationaryPlot{CONV_DIFF3}{80}

\pagebreak

\section{Numerical Results for Splitting Schemes}\label{appendix:numerical_results_for_splitting_schemes}


\newcommand{\SplittingMinipageScale}{0.45\textwidth}

\newcommand{\SplittingScale}{0.875\textwidth}

\newcommand{\SplittingPlots}[5]{
    \begin{minipage}{\SplittingMinipageScale}
        \captionsetup{type=figure}
        \centering
        \includegraphics[width=\SplittingScale]{tex_figures/splitting_plots_#1_#2_k#3_#4.pdf}
        \captionof{figure}{#5}\label{fig:appendix:galerkin:splitting_plots:#1:#2:k#3:#4}
    \end{minipage}
}

\newcommand{\SplittingCaptionAbs}{Absolute error of the splitting scheme \mbox{approximation}.}
\newcommand{\SplittingCaptionRel}{Relative error of the splitting scheme \mbox{approximation}.}

\newcommand{\SplittingSkip}{\\[0.5em]}

\newcommand{\SplittingPlotsLegend}{
    \begin{minipage}[c]{\textwidth}
        \centering
        \includegraphics{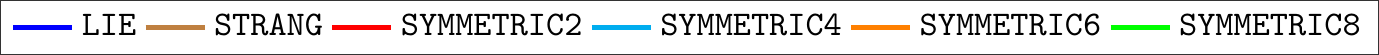}
    \end{minipage}
}

\begin{center}\texttt{RAIL}, $n=5177$ and $M^T\dot{X}(t)M = A^T X(t) M +  M^T X(t) A - M^T X(t) B B ^T X(t) M + C^T C,\ X(0)=0$. \end{center}
\SplittingPlots{RAIL}{5177}{4}{abs}{\SplittingCaptionAbs{}}
\hfill
\SplittingPlots{RAIL}{5177}{4}{rel}{\SplittingCaptionRel{}}
\SplittingSkip{}

\SplittingPlots{RAIL}{5177}{0}{abs}{\SplittingCaptionAbs{}}
\hfill
\SplittingPlots{RAIL}{5177}{0}{rel}{\SplittingCaptionRel{}}
\SplittingSkip{}

\SplittingPlots{RAIL}{5177}{-4}{abs}{\SplittingCaptionAbs{}}
\hfill
\SplittingPlots{RAIL}{5177}{-4}{rel}{\SplittingCaptionRel{}}
\SplittingSkip{}
\SplittingPlotsLegend{}
\pagebreak

\begin{center}\texttt{CONV\_DIFF}, $n=6400$ and $\dot{X}(t) = A^T X(t)  +   X(t) A - X(t) B B ^T X(t) + C^T C,\ X(0)=0$. \end{center}
\SplittingPlots{CONV_DIFF3}{80}{-10}{abs}{\SplittingCaptionAbs{}}
\hfill
\SplittingPlots{CONV_DIFF3}{80}{-10}{rel}{\SplittingCaptionRel{}}
\SplittingSkip{}

\SplittingPlots{CONV_DIFF3}{80}{-14}{abs}{\SplittingCaptionAbs{}}
\hfill
\SplittingPlots{CONV_DIFF3}{80}{-14}{rel}{\SplittingCaptionRel{}}
\SplittingSkip{}

\SplittingPlots{CONV_DIFF3}{80}{-16}{abs}{\SplittingCaptionAbs{}}
\hfill
\SplittingPlots{CONV_DIFF3}{80}{-16}{rel}{\SplittingCaptionRel{}}
\SplittingSkip{}
\SplittingPlotsLegend{}
\pagebreak

\end{document}